\documentclass{amsart}
\usepackage{amsmath, amssymb, amsthm}

\newtheorem{theorem}{Theorem}
\newtheorem{lemma}{Lemma}
\newtheorem{proposition}{Proposition}

\newtheorem{manualtheoreminner}{Theorem}
\newenvironment{manualtheorem}[1]{%
	\renewcommand\themanualtheoreminner{#1}%
	\manualtheoreminner
}{\endmanualtheoreminner}

\theoremstyle{definition}

\theoremstyle{remark}
\newtheorem*{remark}{Remark}

\newcommand{\R}{\mathbb{R}}
\newcommand{\norm}[1]{\left\lVert#1\right\rVert}
\newcommand{\abs}[1]{\left\lvert#1\right\rvert}
\newcommand{\inp}[2]{\left\langle #1, #2 \right\rangle}
\newcommand{\dmu}{\,d\mu}

\DeclareMathOperator{\Tr}{Tr}
\DeclareMathOperator{\Cov}{Cov}
\DeclareMathOperator{\Var}{Var}

\DeclareMathOperator{\vol}{vol}

\title{Sharp stability for the (B)-theorem}
\author{Eli Putterman}
\date{\today}

\begin{document}

\begin{abstract} The (B)-theorem of Cordero-Erausquin, Fradelizi and Maurey states that if $\gamma$ is the standard Gaussian in $\mathbb R^n$, $K \subset \mathbb R^n$ is an origin-symmetric convex set, and $s, t \in \mathbb R$ then $\gamma\left(e^{\frac{s + t}{2}} K\right) \ge \sqrt{\gamma(e^{s} K) \gamma(e^{t} K)}$. Herscovici, Livshyts, Rotem and Volberg proved a stability version of this result, showing that if one has equality up to a factor $(1 + \epsilon)$ in the (B)-inequality for $K$ then the inradius of $K$ must be either ``very large'' or ``very small,'' where the bounds depend on $\epsilon$ and on $n$. We give a new stability estimate which is dimension-free and also yields more precise information about bodies which are near-optimizers of the (B)-inequality. In particular, our results imply that if $\gamma\left(e^{\frac{s + t}{2}} K\right) \le (1 + \epsilon) \sqrt{\gamma(e^{s} K) \gamma(e^{t} K)}$, then every principal component of the covariance matrix of the probability measure obtained by restricting the Gaussian to $K$ must either be at least $1 - O(\epsilon)$ or at most $O(\epsilon)$, which is sharp. Our method extends immediately to yield stability estimates for generalizations of the (B)-inequality, namely the ``strong'' and ``functional'' (B)-inequalities, which reduce to spectral questions about $1$-log-concave measures on $\mathbb R^n$.
\end{abstract}

\maketitle

\section{Introduction}
Fix a natural number $n$, and let $\gamma$ denote the standard Gaussian measure on $\R^n$, which has density $d\gamma = (2\pi)^{-n/2} e^{-\frac{|x|^2}{2}}\,dx$. For a matrix $A \in \mathbb R^{n \times n}$ and $K \subset \mathbb R^n$ we set $AK = \{Ax: x \in K\}$; in particular, for $t \in \mathbb R$ we write $tK = (tI_n)K$. We call a set $K \subset \mathbb R^n$ origin-symmetric, or simply symmetric, if $K=-K := (-1)K$.

By the Pr\'ekopa-Leindler inequality, for any convex set $K \subset \mathbb R^n$ and $a, b > 0$ it holds that 
$$ \gamma\left(\frac{a + b}{2}K\right) \ge \sqrt{\gamma(aK) \gamma(bK)}.$$

It was conjectured by Banaszczyk (as reported by Lata{\l}a in \cite{L02}), and proven by Cordero-Erausquin, Fradelizi and Maurey \cite{CFM03}, that the Pr\'ekopa-Leindler inequality can be strengthened if $K$ is a symmetric convex set:

\begin{theorem}[(B)-theorem]\label{thm:B_thm} For every symmetric convex set $K\subset\R^n$ and every $s, t > 0$,
\begin{equation}\label{eq:B_thm}
\gamma\left(\sqrt{ab} K\right)\geq\sqrt{\gamma(aK)\gamma(bK)}. 
\end{equation}
\end{theorem}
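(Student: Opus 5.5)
Writing $a=e^s$, $b=e^t$, the statement says that $\phi(t) := \log \gamma(e^t K)$ is concave on $\R$. Concavity is local, so it suffices to show $\phi''(0)\le 0$ for every symmetric convex $K$ (of positive measure; degenerate $K$ are trivial or follow by approximation). Indeed, $\phi''(t)$ is just $\phi''(0)$ computed for the body $e^tK$, which is again symmetric convex. Also, by approximation, I may assume $K$ has smooth boundary. I will show more, namely that $t\mapsto \log\gamma(e^{\Delta(t)}K)$ is concave for a diagonal matrix flow, but the scalar case is what is needed here.

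\emph{Step 1: compute the derivatives.} Set $\mu = \gamma|_K/\gamma(K)$, the Gaussian restricted to $K$, so that $\mu$ is $1$-log-concave and even. Changing variables gives
\[
\gamma(e^tK)=e^{nt}\int_K (2\pi)^{-n/2}e^{-e^{2t}|x|^2/2}\,dx .
\]
Differentiating under the integral at $t=0$ yields
\[
\phi'(0)=n-\int |x|^2\dmu, \qquad \phi''(0)=-2\int|x|^2\dmu+\Var_\mu(|x|^2).
\]
So the claim reduces to the variance inequality
\[
\Var_\mu(|x|^2)\le 2\int |x|^2\dmu .
\]

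\emph{Step 2: prove the variance inequality via the Brascamp--Lieb inequality.} Since $\mu$ is $1$-log-concave, its potential $V$ (which is $+\infty$ off $K$) satisfies $\nabla^2V\ge I$. The Brascamp--Lieb inequality $\Var_\mu(f)\le\int \inp{(\nabla^2V)^{-1}\nabla f}{\nabla f}\dmu$ then gives, for $f=|x|^2$,
\[
\Var_\mu(|x|^2)\le 4\int|x|^2\dmu .
\]
This loses a factor of $2$. The symmetry of $K$ has to recover it.

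The standard refinement, which goes back to the Cordero-Erausquin--Fradelizi--Maurey approach, is the following. For an \emph{even} function $f$ one works with the operator $L=\Delta-\inp{x}{\nabla}$ with Neumann conditions on $\partial K$. Write $f-\int f\dmu = Lu$, where $u$ can be taken even. Then
\[
\Var_\mu(f)=\int \left(2\inp{\nabla f}{\nabla u}-\abs{\nabla u}^2\right)\dmu - (\text{Bochner terms}).
\]
The Bochner formula gives
\[
\int (Lu)^2\dmu=\int\left(\norm{\nabla^2u}_{HS}^2+\abs{\nabla u}^2\right)\dmu+\int_{\partial K}\mathrm{II}(\nabla u,\nabla u),
\]
and convexity makes the boundary term nonnegative. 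Because $u$ is even, $\nabla u$ is odd and has $\mu$-mean zero. The Gaussian-restricted Poincaré inequality applied to each component $\partial_i u$ then gives
\[
\int\norm{\nabla^2u}^2_{HS}\dmu\ge\int|\nabla u|^2\dmu .
\]
Hence $\int (Lu)^2\dmu\ge 2\int|\nabla u|^2\dmu$, and this improved spectral gap for odd vector fields gives $\Var_\mu(f)\le\frac12\int|\nabla f|^2\dmu$ for even $f$. Applied to $f=|x|^2$, this yields exactly $\Var_\mu(|x|^2)\le 2\int|x|^2\dmu$, which is what Step 1 requires.

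\emph{Main obstacle.} The delicate point is the rigorous justification of the Neumann problem and the boundary term on a convex domain. This includes solving $Lu=f-\int f\dmu$ with enough regularity up to $\partial K$, checking the sign of the second fundamental form term, and verifying that $u$ can be chosen even. I would handle this by first treating smooth strictly convex $K$, or by replacing the indicator of $K$ by a smooth even log-concave approximation $e^{-W}$ and running the $L^2$/Hörmander argument on all of $\R^n$, where no boundary terms arise. Then I would pass to the limit, since both sides of the variance inequality are continuous under such approximation.
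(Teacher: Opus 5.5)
Your proposal is correct and is essentially the paper's argument: the second-derivative computation reduces the (B)-inequality to the even Poincar\'e inequality $\Var_\mu(f)\le\frac12\int|\nabla f|^2\,d\mu$ for $f=|x|^2$. That inequality follows from Bochner's formula together with the Poincar\'e inequality applied to the odd, hence mean-zero, partial derivatives. The differences are only technical: you solve $Lu=f-\int f\,d\mu$ with Neumann data and a Reilly boundary term, whereas the paper uses even eigenfunctions of a smooth full-support approximation; also, with your convention $L=\Delta-\langle x,\nabla\rangle$ you should take $Lu=-(f-\int f\,d\mu)$ for your displayed variance identity to hold with the stated sign.
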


Equivalently, if $K$ is a symmetric convex set then $f(t) = \gamma(e^t K)$ is a log-concave function on $\mathbb R$, i.e., $\log f$ is concave.

In fact, Cordero-Erausquin, Fradelizi and Maurey \cite{CFM03} proved a more general statement, regarding dilations by arbitrary positive semidefinite matrices:

\begin{theorem}[Strong (B)-theorem]\label{thm:strong_B_thm} For every symmetric convex set $K \subset\R^n$ and every symmetric matrix $A$, the function $t \mapsto \gamma\left(e^{tA} K\right)$
is log-concave on $\mathbb R$. Equivalently, for every $s, t \in \mathbb R$, one has
\begin{equation} \label{eq:strong_B}
\gamma\left(e^{\frac{s + t}{2} A} K\right) \ge \sqrt{\gamma(e^{sA} K) \gamma(e^{tA} K)}.
\end{equation}
\end{theorem}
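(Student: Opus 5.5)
The plan is to reduce the log-concavity of $f(t)=\gamma(e^{tA}K)$ to a second-derivative inequality at a single point, and then to prove that inequality as a spectral-gap estimate for even functions. The reduction uses two observations. First, $e^{(t_0+t)A}K = e^{tA}(e^{t_0A}K)$, and $e^{t_0A}K$ is again a symmetric convex set, so it suffices to show $(\log f)''(0)\le 0$ for every symmetric convex $K$. Second, $\gamma$ is rotation invariant, so we may conjugate by an orthogonal matrix and assume $A$ is diagonal; this is not strictly needed but simplifies bookkeeping. If $K$ has empty interior then $f\equiv 0$ and there is nothing to prove, so we assume $\gamma(K)>0$.

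Changing variables $y=e^{tA}x$ gives
$$f(t)=(2\pi)^{-n/2}e^{t\Tr A}\int_K e^{-V_t(x)}\,dx, \qquad V_t(x)=\tfrac12|e^{tA}x|^2.$$
At $t=0$ we have $\partial_t V_t=\inp{Ax}{x}$ and $\partial_t^2 V_t = 2|Ax|^2$. Let $\mu$ be $\gamma$ restricted to $K$ and normalized. The standard computation for $\log\int e^{-V_t}$ then yields
$$(\log f)''(0)=\Var_\mu\big(\inp{Ax}{x}\big)-2\int |Ax|^2\dmu .$$
Write $g(x)=\inp{Ax}{x}$, so that $|\nabla g|^2=4|Ax|^2$. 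The required inequality becomes
$$\Var_\mu(g)\le \tfrac12\int|\nabla g|^2\dmu .$$
The Brascamp--Lieb (Poincaré) inequality for the $1$-uniformly log-concave measure $\mu$ gives only the constant $1$, which is too weak by a factor of $2$. The factor must come from the fact that $g$ is even and $\mu$ is symmetric.

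The main step is the improved spectral gap: for a symmetric measure $d\mu=e^{-W}dx$ on $\R^n$ with $\nabla^2 W\ge I_n$, every even $g$ satisfies $\Var_\mu(g)\le\frac12\int|\nabla g|^2\dmu$. To avoid boundary terms, I would first approximate $1_K$ by $e^{-\psi_k}$, with $\psi_k$ smooth, convex and even (for example $\psi_k = k\,d(x,K)^2$, suitably smoothed). The measures $d\mu_k\propto e^{-|x|^2/2-\psi_k}dx$ are then smooth, symmetric and $1$-uniformly log-concave on all of $\R^n$. They converge to $\mu$, and the variance and energy of the quadratic $g$ pass to the limit. Let $L=\Delta-\inp{\nabla W}{\nabla}$. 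For even $g$ with mean zero, solve $Lu=g$ with $u$ even; this is possible by uniqueness of the solution and the symmetry of $L$. Bochner's formula gives
$$\int (Lu)^2\dmu=\int\|\nabla^2u\|_{HS}^2\dmu+\int\inp{\nabla^2W\nabla u}{\nabla u}\dmu\ge\int\|\nabla^2u\|_{HS}^2\dmu+\int|\nabla u|^2\dmu .$$
Each $\partial_iu$ is odd, hence has $\mu$-mean zero, so the ordinary Poincaré inequality (gap $1$) gives $\int(\partial_iu)^2\dmu\le\int|\nabla\partial_iu|^2\dmu$. Summing over $i$ yields $\int\|\nabla^2u\|^2_{HS}\dmu\ge\int|\nabla u|^2\dmu$, and therefore
$$\int(Lu)^2\dmu\ge 2\int|\nabla u|^2\dmu = -2\int u\,Lu\dmu .$$
This says that $-L$ has spectral gap at least $2$ on even functions. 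Equivalently, by the usual duality argument (Cauchy--Schwarz on $\int g^2=\int g\,Lu = -\int\inp{\nabla g}{\nabla u}$), we get $\Var_\mu(g)\le\frac12\int|\nabla g|^2\dmu$.

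I expect the main obstacle to be the analytic justification in this last step rather than the algebra. One has to ensure that $Lu=g$ is solvable with enough regularity and integrability for Bochner's formula and the integrations by parts to hold on $\R^n$, which is standard for smooth uniformly log-concave measures. One also has to check that the approximation $\mu_k\to\mu$ preserves the final inequality. Both are routine given uniform log-concavity, and together with the reduction above they show $(\log f)''\le 0$ everywhere, which is the strong (B)-theorem.
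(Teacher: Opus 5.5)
Your proof is correct and follows essentially the same route as the paper. The shared steps are the second-derivative formula $(\log f)''=\Var_\mu(\langle Ax,x\rangle)-\tfrac12\int|\nabla g|^2\,d\mu$, regularization to smooth even $1$-log-concave measures, and the even Poincar\'e inequality via Bochner's formula plus the ordinary Poincar\'e inequality for the odd, hence centered, partial derivatives. The one minor difference is that you apply Bochner to $u$ with $Lu=g$ and finish by Cauchy--Schwarz duality, whereas the paper applies it to an even eigenfunction (getting $\lambda^2\ge\lambda_1\lambda+\lambda$) and uses the spectral decomposition; the paper itself uses your $L^{-1}$ device in its stability section.
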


We remark that by the rotational invariance of the Gaussian, this result immediately reduces to the case that $A$ is a diagonal matrix, which is how the theorem is often stated.

The (B)-theorem is a particular case of a still mostly conjectural theory, the log-Brunn-Minkowski theory, which predicts that classical convex-geometric inequalities may be strengthened in the presence of symmetry assumptions; see \cite{HLRV23} for some pointers to the literature.

Recently, Herscovici, Livshyts, Rotem, and Volberg \cite{HLRV23} studied cases of near-equality in Theorems \ref{thm:B_thm} and \ref{thm:strong_B_thm}. Firstly, they proved that there are no ``nontrivial'' equality cases in the (B)-theorem, and, more generally, in the strong (B)-theorem. That is, if equality holds in \eqref{eq:B_thm} (for $s \neq t$), then either $K$ has empty interior (in which case both sides are $0$) or $K = \mathbb R^n$ (in which case both sides are $1$) \cite[Corollary 1.4]{HLRV23}.

As for the strong (B)-theorem, choose an orthonormal basis in which $A$ is diagonal and $\ker A = \mathbb R^k \times \{0\}$ for some $k \in \{0, 1, \ldots, n\}$. Herscovici et al. proved that if equality holds in \eqref{eq:strong_B}, then, either $\gamma(K) \in \{0, 1\}$ as before, or $K$ is of the form $L \times \mathbb R^{n - k}$ for some symmetric convex set $L \subset \mathbb R^k$ \cite[Corollary 1.5]{HLRV23}.

These characterizations of equality cases are actually simple consequences of the main results of \cite{HLRV23}, which are stability versions of the (B)- and strong (B)-theorems. The term ``stability version'' refers to the following type of result. Suppose an inequality achieves equality only for a specific class of objects (``extremizers''). A stability theorem for this inequality asserts that if an object is close to saturating the inequality, it must be quantitatively close, in a suitable metric, to the class of extremizers. Stability versions of convex-geometric inequalities have attracted a great deal of attention in recent years; see, e.g., \cite{BF22,CF20, FGS24,FIL16}.

To state Herscovici et al.'s stability version of the (B)-theorem, recall that a convex body is a bounded convex set with nonempty interior, and that the inradius $r(K)$ of a symmetric convex body $K$ is defined as
$$r(K) = \sup \{r > 0\,|\, \forall x \in \mathbb R^n: |x| < r \Rightarrow x \in K\}.$$

\begin{theorem}[\cite{HLRV23}, Theorem 1.1]\label{thm:hlrv_stability}
Let $s < t$ and let $K \subset \mathbb R^n$ be a symmetric convex body, and suppose that
$$\gamma(e^{\frac{s + t}{2}} K) \leq (1 + \epsilon) \sqrt{\gamma(e^ s K)\gamma(e^t K)}$$
for some $0 < \epsilon < \frac{c}{n^2} (t - s)$, where $c > 0$ is a universal constant. Then either
$$r(K)\geq \frac{1}{e^t} \sqrt{\log \left(\frac{c (t - s)^2}{n \epsilon}\right)},$$ 
or 
$$r(K)\leq \frac{C\sqrt{n}}{e^s} \epsilon^{\frac{1}{n+1}}(t - s)^{-\frac{2}{n+1}},$$
where $C, c > 0$ are universal constants.
\end{theorem}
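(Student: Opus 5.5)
The plan is to run everything through the second derivative of $\varphi(\tau) = \log \gamma(e^{\tau} K)$. The (B)-theorem says $\varphi$ is concave, and the near-equality hypothesis says $\varphi$ is almost affine on $[s,t]$. The key fact, from the Cordero-Erausquin--Fradelizi--Maurey argument, is a formula for $\varphi''$. Write $\mu_\tau$ for the probability measure $\gamma$ restricted to $e^\tau K$ and normalized. Differentiating $\gamma(e^\tau K) = \int_K e^{n\tau} e^{-e^{2\tau}|y|^2/2}\,dy/(2\pi)^{n/2}$ twice in $\tau$ gives
\begin{equation*}
-\varphi''(\tau) = 2\int |x|^2 \dmu_\tau - \Var_{\mu_\tau}\!\left(|x|^2\right).
\end{equation*}
The (B)-theorem is the inequality $\Var_{\mu_\tau}(|x|^2) \le 2\int |x|^2\dmu_\tau$. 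This follows from the Brascamp--Lieb inequality for the $1$-log-concave measure $\mu_\tau$, applied to the even function $|x|^2$: since $|x|^2$ is even and $K$ is symmetric, the gradient $2x$ has mean zero, which gives the improved Poincaré-type bound.

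First I convert the hypothesis into an integrated deficit. By concavity, $\varphi(\tfrac{s+t}{2}) - \tfrac12(\varphi(s)+\varphi(t))$ is $\int_s^t w(\tau)\,(-\varphi''(\tau))\,d\tau$ with a tent weight $w \ge 0$ peaked at the midpoint. This quantity is at most $\log(1+\epsilon)\le\epsilon$. Hence on the middle half of $[s,t]$ there is some $\tau_0$, in fact a set of $\tau$ of positive measure, with $-\varphi''(\tau_0) \lesssim \epsilon/(t-s)^2$.

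The second step is the main obstacle: a quantitative deficit version of the Brascamp--Lieb step. It must say that if $2\int|x|^2\dmu - \Var_\mu(|x|^2) \le \delta$ with $\mu$ the Gaussian restricted to a symmetric convex set $L = e^{\tau_0}K$, then $L$ is either tiny or huge. I would bound the deficit from below by a boundary term. Using the $L^2$ (Hörmander) proof of Brascamp--Lieb on the convex domain $L$, with Neumann condition, the deficit is at least a contribution from the second fundamental form of $\partial L$ plus the Gaussian curvature term. Concretely, one gets at least something like $\int_L |x|^2 \dmu - \Var(\ldots)$ restricted to directions in which $L$ is ``not full''.

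To turn this into inradius bounds I would use crude comparisons. If $r(L)=r$, then $L$ lies in a symmetric slab $\{|\langle x,\theta\rangle| \le R\}$, where $R$ is comparable to $r$ up to a factor $\sqrt n$ by John's theorem. Restricting $\gamma$ to such a slab, the one-dimensional marginal in $\theta$ shows two things:
\begin{itemize}
\item if $r \ll 1$, then $\int \langle x,\theta\rangle^2\dmu$ is tiny, and the deficit has size about the volume-type quantity $\gamma(L)$-ratio, giving a lower bound like $(e^{\tau_0}r)^{n+1}/n^{(n+1)/2}$;
\item if $r$ is moderate, the deficit is at least of order $e^{-e^{2\tau_0}r^2}$, coming from the Gaussian tail mass near the boundary.
\end{itemize}
Comparing either bound with $\delta \lesssim \epsilon/(t-s)^2$, and using $e^s \le e^{\tau_0} \le e^t$, yields the two alternatives: $r \ge e^{-t}\sqrt{\log(c(t-s)^2/(n\epsilon))}$, or $r \le C\sqrt n\, e^{-s}\epsilon^{1/(n+1)}(t-s)^{-2/(n+1)}$. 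The condition $\epsilon < c(t-s)/n^2$ rules out the intermediate regime.

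The delicate part is making the small-$r$ estimate carry the exponent $1/(n+1)$. I would try to get it by comparing $L$ with a cylinder over a ball of radius $r$ in the slab, and estimating the deficit through the one-dimensional CFM computation for the slab together with a volume factor in the remaining $n-1$ directions.
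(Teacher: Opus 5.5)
Your reduction is correct and matches the route the paper attributes to \cite{HLRV23} (the paper quotes this theorem without reproving it). You have $-\varphi''(\tau)=2\int|x|^2\,d\mu_\tau-\Var_{\mu_\tau}(|x|^2)$, and Lemma~\ref{lem:concavity} gives $\tau_0\in[s,t]$ with $-\varphi''(\tau_0)\le 8\epsilon/(t-s)^2$. But the whole content of the theorem is the next step, and you do not carry it out. That step is a quantitative lower bound on this deficit for $\gamma_L$, $L=e^{\tau_0}K$, in terms of $r(L)$, valid throughout the intermediate range of $r(L)$. Your two bulleted bounds, of order $(r(L)/\sqrt n)^{n+1}$ and $e^{-r(L)^2}$, are asserted without derivation; ``the volume-type quantity $\gamma(L)$-ratio'' is not an argument. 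The claim that $\epsilon<c(t-s)/n^2$ ``rules out the intermediate regime'' is also not justified. So neither alternative, nor the exponent $\frac1{n+1}$, is actually proved.

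The mechanisms you point to also cannot supply this bound as stated. In the Neumann form of Bochner's identity on $L$, with $Lu=f=|x|^2-\int|x|^2\dmu$, the deficit equals $\frac12\int|\nabla f-2\nabla u|^2\dmu+\bigl(\int\norm{\nabla^2u}_{HS}^2\dmu-\int|\nabla u|^2\dmu\bigr)+\int_{\partial L}\mathrm{II}(\nabla u,\nabla u)$. The ``Gaussian curvature'' term gives no slack at all, since $\nabla^2V=I$ exactly. The boundary term vanishes for the boxes $[-1,1]\times[-M,M]^{n-1}$ with $M\ge1$ (inradius $1$), since the Neumann problem splits coordinatewise. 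Yet their deficit is at least $2\int x_1^2\,d\gamma_{[-1,1]}-\Var_{\gamma_{[-1,1]}}(x_1^2)\approx0.5$. So the lower bound must come from the interior terms, i.e.\ from stability of the Poincar\'e inequality for the odd functions $\partial_iu$. This is what \cite{HLRV23} exploit, and what Lemma~\ref{lem:odd_stability} and Proposition~\ref{prop:even_stability} do here in sharper form. Likewise, the slab containment $L\subseteq\{|\langle x,\theta\rangle|\le r(L)\}$ gives only an \emph{upper} bound on the variance of $\gamma_L$ in direction $\theta$. Small variance is exactly the small-body near-equality regime, so this alone cannot force a large deficit. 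You need a matching lower bound (from $r(L)B_2^n\subseteq L$), plus an inequality turning such two-sided information into a lower bound on the deficit, and neither appears. Two minor points: Brascamp--Lieb alone gives only $\Var_{\mu}(|x|^2)\le4\int|x|^2\dmu$; the factor $2$ is the CFM argument of Proposition~\ref{prop:even_poinc}, which uses $\int\nabla u\dmu=\int xf\dmu=0$ for $u=L^{-1}f$, not $\int\nabla f\dmu=0$. And for symmetric $L$, $r(L)$ is the minimum of the support function, so $L$ lies in a slab of half-width exactly $r(L)$; John's theorem is not needed.
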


Herscovici et al. also prove a stability version of the strong (B)-inequality \cite[Theorem 1.5]{HLRV23}, whose statement we omit as it is quite complicated.

The appearance of two distinct cases of near-equality in the (B)-theorem in Theorem \ref{thm:hlrv_stability} has a clear intuitive meaning. Suppose for simplicity that $s$ and $t$ are of order $1$. If $K$ is extremely small, then in a neighborhood of $K$, the Gaussian measure $\gamma$ is not very different from the Lebesgue measure, for which one always has equality in the analogous ``(B)-theorem'', the trivial identity $\vol(a K) \vol(b K) = \vol(\sqrt{ab} K)^2$, so there will be near-equality for the Gaussian measure as well. On the other hand, if $K$ has a very large inradius, then $\gamma(a K), \gamma(b K), \gamma(\sqrt{ab} K)$ are all close to $1$ and so the (B)-inequality must be close to saturated for trivial reasons.

In addition, in the large-inradius regime, the results of \cite{HLRV23} accurately describe the behavior of near-equality cases of the (B)-theorem not just qualitatively, but quantitatively as well. In fact, by taking $K$ to be a symmetric strip $[-R, R] \subset \mathbb R$, Herscovici et al. showed that the dependence of $r(K)$ on $\epsilon$ as $\epsilon \to 0$ cannot be improved beyond $r(K) \gtrsim \sqrt{\log\frac{1}{\epsilon}}$.

However, in the case of ``small'' convex bodies, Herscovici et al. remarked that their bound was unlikely to be sharp. Indeed, one may compute that if $K = r B_2^n$ is the Euclidean ball of radius $r$ around the origin, then 
$$\frac{\gamma(\sqrt K)}{ \sqrt{\gamma(2K) \gamma(K)}} = 1 + \Theta(r^2),$$
as $r \to 0$. That is, the dependence of the inradius on the stability deficit $\epsilon$ in the family of Euclidean balls is $r(K) \sim \sqrt\epsilon$, rather than $r(K) \sim \epsilon^{\frac{1}{n + 1}}$. If this behavior is representative of the class of ``small'' near-optimizers in the (B)-theorem, then the upper estimate on $r(K)$ in Theorem \ref{thm:hlrv_stability} could be substantially improved.

\subsection{The spectral reduction}
The proof of the (B)-theorem itself, as well as that of its stability version, proceeds via reduction of the (B)-inequality \eqref{eq:strong_B} to ``local'' inequalities, i.e., inequalities which concern a single dilate $e^{At} K$ of $K$ rather than dilates corresponding to different values of $t$. 

The idea is quite simple. Fixing a positive-definite matrix $A$, a symmetric convex body $K$, and writing $V(t) = \log \gamma(e^{A t} K)$, Theorem \ref{thm:strong_B_thm} is equivalent to the statement that $V$ is a concave function of $t$ for any such $A$ and $K$. We may assume $K$ has nonempty interior, as otherwise this is trivial.

Writing $K_t = e^{At} K$, one computes directly that $V$ is twice differentiable, with second derivative given by
\begin{align*}
V''(t) &= \frac{1}{\gamma(K_t)} \int_{K_t} \langle x, Ax\rangle^2\,d\gamma - \frac{2}{\gamma(K_t)}\int_{K_t} |Ax|^2\,d\gamma \\ &\qquad -\left(\frac{1}{\gamma(K_t)}\int_{K_t} \langle x, Ax\rangle\,d\gamma \right)^2.
\end{align*}

Letting $\gamma_{K_t}$ be the normalized restriction of the Gaussian to $K_t$, i.e., the measure $d\gamma_{K_t} = \frac{1}{\gamma(K_t)} 1_{K_t} \,d\gamma$, and $f(x) = \langle x, Ax\rangle$, we can write this as
\begin{equation}\label{eq:B_der2}
V''(t) = \Var_{\gamma_{K_t}}[f] - \frac{1}{2} \mathbb E_{\gamma_{K_t}}[|\nabla f|^2].
\end{equation}
Thus, the strong (B)-theorem reduces to showing that for every $A$ and $K$, the expression on the right-hand side of \eqref{eq:B_der2} is nonpositive.

It was classically known that the measure $\gamma_{K_t}$ satisfies a Poincar\'e inequality with constant $1$, i.e., for any (nice) function $g: \mathbb R^n \to \mathbb R$ one has
$$\Var_{\gamma_{K_t}}[g] \le \mathbb E_{\gamma_{K_t}}[|\nabla g|^2].$$
The crucial idea of Cordero-Erausquin, Fradelizi and Maurey was to show that a stronger Poincar\'e inequality holds if one restricts to even functions: if $g$ is even then one has
\begin{equation}\label{eq:even_poin_Kt}
\Var_{\gamma_{K_t}}[g] \le \frac{1}{2}\mathbb E_{\gamma_{K_t}}[|\nabla g|^2].
\end{equation}
(We will recall one proof of both these inequalities in \S \ref{sec:prelims} below.) Applying \eqref{eq:even_poin_Kt} with $g(x) = \langle x, Ax\rangle$ we obtain $V''(t) \le 0$, as desired.

The proof of stability for the (B)-theorem in \cite{HLRV23} also proceeds via the second derivative. Suppose one has 
$$\gamma(e^{\frac{t + s}{2}} K)^2 \le (1 + \epsilon) \gamma(e^s K) \gamma(e^t K)$$
for some small $\epsilon > 0$. Letting $V(t) = \log \gamma(e^t K)$, this implies that
$$2 V\left(\frac{t + s}{2}\right) - V(s) - V(t) \le \epsilon.$$
As $V$ is concave, this implies its second derivative must be small, due to the following elementary lemma:

\begin{lemma}[{\cite[Lemma 7.1]{HLRV23}}] \label{lem:concavity} If $V \in C^2(\mathbb R)$, $s < t$, then
$$V(s) + V(t) - 2 V\left(\frac{s + t}{2}\right)  = \frac{(t - s)^2}{4} \int_{-1}^1 (1 - |r|) V''\left(\frac{(1 - r) s + (1 + r) t}{2}\right)\,dr.$$
\end{lemma}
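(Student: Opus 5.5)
This is the Taylor formula with integral remainder, applied at the midpoint in both directions. Set $m = \frac{s+t}{2}$ and $h = \frac{t-s}{2}$, so that $s = m - h$ and $t = m + h$. For $V \in C^2$, Taylor's theorem with integral remainder gives
\begin{align*}
V(m+h) &= V(m) + hV'(m) + \int_0^h (h-u) V''(m+u)\,du,\\
V(m-h) &= V(m) - hV'(m) + \int_0^h (h-u) V''(m-u)\,du .
\end{align*}
(The second line follows from the first applied to $u \mapsto V(m-u)$.)

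Adding these, the first-order terms cancel and we get
$$V(s)+V(t)-2V(m) = \int_{-h}^{h} (h-|u|)\,V''(m+u)\,du .$$

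Now substitute $u = hr$ with $r \in [-1,1]$, so $du = h\,dr$ and $h - |u| = h(1-|r|)$. This produces the prefactor $h^2 = \frac{(t-s)^2}{4}$. The evaluation point is
$$m + hr = \frac{s+t}{2} + \frac{(t-s)r}{2} = \frac{(1-r)s + (1+r)t}{2},$$
which matches the argument of $V''$ in the statement. This completes the identity.

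There is no real obstacle here. The only point needing care is the sign and orientation in the remainder term for $V(m-h)$, which the reflection argument handles, and checking that the kernel $h - |u|$ rescales correctly. As an alternative check, one could integrate $\int_{-1}^1 (1-|r|)V''(\cdot)\,dr$ by parts twice on each half-interval $[-1,0]$ and $[0,1]$. The boundary terms at $r=0$ involving $V'$ cancel, and what remains is exactly $\frac{4}{(t-s)^2}\bigl(V(s)+V(t)-2V(m)\bigr)$.
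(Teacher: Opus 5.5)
Your proof is correct: the two integral-remainder Taylor expansions about $m=\frac{s+t}{2}$, the cancellation of the $\pm hV'(m)$ terms, and the rescaling $u=hr$ (using $h-|u|=h(1-|r|)$ since $h>0$) give exactly the stated identity. The paper itself gives no proof of this lemma and simply quotes it from \cite[Lemma 7.1]{HLRV23}, so there is no in-paper argument to compare against; yours is the standard one and is complete.
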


As $V''(r) \le 0$ for all $r \in [-1, 1]$ and $(1 - |r|)\,dr$ is a probability density on $[-1,1]$, Lemma \ref{lem:concavity} immediately yields the existence of some $r \in [s, t]$ such that $-V''(r) \le \frac{4\epsilon}{(t - s)^2}$, which implies by \eqref{eq:B_der2} that the even Poincar\'e inequality for $\gamma_{e^{Ar} K}$ is nearly saturated by the function $f(x) = |x|^2$. The main technical work in \cite{HLRV23} goes into drawing out the implications of this fact: by following the proof of the even Poincar\'e inequality for $\gamma_{e^r K}$ and drawing out the implications of near-equality at each step, Herscovici et al. derive the dichotomy for the inradius of $e^r K$ in the statement of Theorem \ref{thm:hlrv_stability}.

\subsection{Our results}
The goal of this note is to generalize and strengthen the stability version of the (B)-theorem introduced by Herscovici et al.

The first ingredient in our work is the generalization of the (B)-theorem beyond convex sets, to the setting of log-concave functions on $\mathbb R^n$. (For our purposes, a log-concave function is an upper semicontinuous function $f: \mathbb R^n \to [0, \infty)$ with convex support $K$ such that $\log f$ is concave on $K$. In particular, indicator functions of closed convex sets are log-concave. To avoid trivialities, we shall always assume that the support of $f$ is $n$-dimensional.)

Recall that the strong (B)-theorem states that if $A$ is a symmetric matrix, $V(t) = \log \gamma(e^{A t} K)$ is log-concave on $\mathbb R$. Letting $1_K$ denote the indicator function of $K$, one may write
$$V(t) = \log \gamma(e^{A t} K) = \log \int_{e^{A t} K} \gamma(x)\,dx = \log \int_{\mathbb R^n} 1_K(e^{-A t} x) \gamma(x)\,dx.$$
It turns out that one can replace the indicator function $1_K$ in the integrand with any even log-concave function, yielding a more general inequality for log-concave functions, which was certainly known to the experts but seems to have first appeared explicitly in \cite{CR20}:

\begin{proposition}[Functional strong (B)-inequality]\label{prop:func_B} If $\phi$ is an integrable even log-concave function on $\mathbb R^n$ and $A$ is any symmetric $n \times n$ matrix, the function 
\begin{equation}\label{eq:functional_B}
    V_A(t) = \log \int_{\mathbb R^n} \phi(e^{-A t} x) \gamma(x)\,dx
\end{equation} 
is concave in $t$.
\end{proposition}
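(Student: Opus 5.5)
We follow the spectral reduction used above for indicator functions. Write $\phi = e^{-W}$ with $W$ even and convex, possibly taking the value $+\infty$ off the support. By approximation we may first assume that $W$ is smooth, finite, strictly convex, and has at least quadratic growth. For instance, replace $\phi$ by its convolution with a small centered Gaussian, multiplied by $e^{-\delta|x|^2}$. These operations preserve evenness and log-concavity, and the corresponding $V_A$ converge pointwise by dominated convergence. Since pointwise limits of concave functions are concave, it then suffices to show $V_A''(t)\le 0$ in the smooth case. Moreover, by the group property $e^{-A(t+h)} = e^{-Ah}e^{-At}$, we may replace $\phi$ by $\phi\circ e^{-At}$, which is again even and log-concave. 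So it suffices to prove $V_A''(0)\le 0$ for every such $\phi$.

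Next we change variables to move the dilation onto the Gaussian. Put $y = e^{-At}x$. Then
$$V_A(t) = \log\int \phi(y)\, e^{t\Tr A}\,(2\pi)^{-n/2} e^{-|e^{At}y|^2/2}\,dy.$$
Define the probability measure $d\mu_t \propto \phi(y) e^{-|e^{At}y|^2/2}\,dy$, and set $f_t(y) = \langle e^{At}y, Ae^{At}y\rangle$. Differentiating gives
$$V_A'(t) = \Tr A - \mathbb E_{\mu_t}[f_t].$$
At $t=0$ we have $\frac{d}{dt}f_t = 2\langle y, A^2 y\rangle$. Writing $f(y) = \langle y, Ay\rangle$, so that $|\nabla f|^2 = 4|Ay|^2$, a second differentiation gives
$$V_A''(0) = \Var_{\mu_0}[f] - 2\,\mathbb E_{\mu_0}[|Ay|^2] = \Var_{\mu_0}[f] - \tfrac12\mathbb E_{\mu_0}[|\nabla f|^2].$$
This matches \eqref{eq:B_der2}, with $\gamma_{K_t}$ replaced by $\mu_0 = \phi\,\gamma/\!\int\phi\,d\gamma$. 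Differentiation under the integral sign is justified by the Gaussian decay of the integrand.

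It remains to prove the even Poincar\'e inequality \eqref{eq:even_poin_Kt} for the measure $\mu = e^{-W-|y|^2/2}dy$ (normalized), for even $g$. This is the main step. We would use the $L^2$ (Hörmander/Brascamp--Lieb) method, as in the classical proof recalled in \S\ref{sec:prelims}. Let $L = \Delta - \nabla(W + |y|^2/2)\cdot\nabla$ be the generator of $\mu$. For an even $g$ with mean zero, solve $Lu = g$. By uniqueness of the solution up to constants and the symmetry of $\mu$, we may take $u$ even. Bochner's formula gives
$$\mathbb E_\mu[(Lu)^2] = \mathbb E_\mu\big[\|\nabla^2 u\|_{HS}^2 + \langle (I + \nabla^2 W)\nabla u,\nabla u\rangle\big] \ge \mathbb E_\mu\big[\|\nabla^2u\|^2_{HS} + |\nabla u|^2\big].$$
Since $u$ is even, each $\partial_i u$ is odd and so has $\mu$-mean zero. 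The ordinary Poincar\'e inequality for $\mu$, which holds with constant $1$ because $\mu$ is $1$-log-concave, then yields
$$\mathbb E_\mu\|\nabla^2 u\|^2_{HS} \ge \mathbb E_\mu|\nabla u|^2.$$
Hence $\mathbb E(Lu)^2 \ge 2\,\mathbb E|\nabla u|^2$. Finally, writing $\Var g = -\mathbb E\langle\nabla u,\nabla g\rangle \le (\mathbb E|\nabla u|^2)^{1/2}(\mathbb E|\nabla g|^2)^{1/2}$ and combining with the previous bound gives $\Var_\mu g\le \frac12\mathbb E_\mu|\nabla g|^2$.

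Applying this with $g = f$ gives $V_A''(0)\le 0$, which proves the proposition. The technical obstacle is the existence and regularity of the solution $u$ together with the integration by parts in Bochner's formula. This is where the smoothing and the strict convexity with quadratic growth of $W$ are used, through standard elliptic theory for $L$ on $\mathbb R^n$. An alternative would be to cite the paper's preliminaries directly if they already treat general $1$-log-concave even measures.
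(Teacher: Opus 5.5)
Your proposal is correct and follows the paper's route: the second-derivative formula $V_A''=\Var_{\mu}[f_A]-\frac12\mathbb E_{\mu}|\nabla f_A|^2$ for the even $1$-log-concave measure $\mu\propto\phi(e^{-At}\cdot)\,\gamma$, combined with the even Poincar\'e inequality, which comes from Bochner's formula together with the ordinary Poincar\'e inequality applied to the odd partial derivatives. The differences are only cosmetic. You reduce to $t=0$ via the group property and prove the even Poincar\'e inequality by solving $Lu=g$ and using duality, whereas the paper works at general $t$ and argues with even eigenfunctions ($\lambda^2\ge\lambda_1\lambda+\lambda$).
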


The functional version of the (B)-theorem lies in the spirit of a well-trodden line of research in convex geometry studying functional versions of convex-geometric constructions and inequalities. See \cite{M07}, \cite{C17}, or \cite[Chapter 9]{AGMII} for introductions to the vast literature on this subject, which goes by many different names (``functionalization of geometry'', ``the geometric theory of log-concave functions'', etc.).

The proof of Proposition \ref{prop:func_B} is the same as for the ordinary (B)-inequality: one computes in the same way that 
\begin{equation}\label{eq:func_B_der2}
V_A''(t) = \Var_{\mu_t}[f_A] - \frac{1}{2} \mathbb E_{\mu_t}[|\nabla f_A|^2]
\end{equation}
where $f_A(x) = \langle x, Ax\rangle$ as before, and $\mu_t$ is the measure with density 
\begin{equation}\label{eq:normalized_density}
    d\mu_t(x) = \frac{\phi(e^{-A t} x)\, d\gamma(x)}{\int \phi(e^{-A t} y)\, d\gamma(y)}.
\end{equation}

Each $\mu_t$ is an even $1$-log-concave measure, namely a measure whose density with respect to the Gaussian, $\frac{d\mu_t}{d\gamma}$, is log-concave. It turns out that such measures satisfy the same Poincar\'e inequality on the subspace of even functions as $\gamma_{K_t}$ does: namely, for any (nice) even function $g$, one has 
\begin{equation}\label{eq:even_poin_mu_t}
\Var_{\mu_t}[g] \le \frac{1}{2}\mathbb E_{\mu_t}[|\nabla g|^2].
\end{equation}
Applying \eqref{eq:even_poin_mu_t} to $g = f_A$ and using \eqref{eq:func_B_der2} shows that $V_A''$ is concave, proving Proposition \ref{prop:func_B}. (Again, the proof of \eqref{eq:even_poin_mu_t} will be recalled in \S \ref{sec:prelims}.)

We work in the setting of the functional (B)-theorem not simply because it is more general, but because it offers significant technical advantages: when $\mu_t$ has a smooth density of full support, there are no boundary terms when integrating by parts and no boundary conditions to take into account when solving PDEs. Standard approximation arguments can then be used to transfer results to more general measures, in particular to the Gaussian measures restricted to convex bodies which arise in the original (B)-theorem.

Our stability result on the (functional) strong (B)-theorem proceeds via the same general approach as that of Herscovici et al.: namely, given an even log-concave function $\phi(x)$ such that $V_A(t)$, as defined in \eqref{eq:functional_B}, satisfies
$$2 V_A\left(\frac{t + s}{2}\right) - V_A(t) - V(s) \le \epsilon,$$
combining \eqref{eq:func_B_der2} and Lemma \ref{lem:concavity} immediately implies the existence of $r \in [s, t]$ such that
\begin{equation}\label{eq:func_stab_spec}
\frac{1}{2}\mathbb E_{\mu_r}[|\nabla f_A|^2] - \Var_{\mu_r}[f_A] \le \frac{4\epsilon}{(t - s)^2},
\end{equation}
where $\mu_r$, defined as in \eqref{eq:normalized_density}, is an even, $1$-log-concave measure. Our main results show that \eqref{eq:func_stab_spec} places strong constraints on the covariance matrix of $\mu_r$. 

Our first result is a stability result for the ``weak'' (B)-theorem, which reduces by the above to the situation in which $f(x) = x^2$ is a near-optimizer in the even Poincar\'e inequality for an even $1$-log-concave measure.

\begin{theorem}\label{thm:cov_stability} Let $\mu$ be an even $1$-log-concave measure with covariance matrix $\Sigma$, let $f(x) = |x|^2$, and suppose that 
\begin{equation}\label{eq:spectral_stability}
\frac{1}{2} \mathbb E[|\nabla f|^2] - \Var_\mu(f) \le \delta
\end{equation}
for $\delta \in (0, c)$, where $c > 0$ is an absolute constant. Then
\begin{equation}
\Tr(\Sigma (I - \Sigma)) \le C\delta
\end{equation}
for some universal constant $C > 0$.
\end{theorem}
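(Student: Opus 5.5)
The plan is to follow the $L^2$ (Hörmander/Bochner) proof of the even Poincaré inequality \eqref{eq:even_poin_mu_t} and record every slack term exactly. By approximation we may assume $d\mu = e^{-W}dx$ with $W(x) = \frac{|x|^2}{2} + \psi(x)$, where $\psi$ is smooth, even and convex. Let $L = \Delta - \langle \nabla W, \nabla\cdot\rangle$ and solve $-Lu = f - \mathbb E_\mu f$. Since $f$ is even, $u$ is even and $v := \nabla u$ is odd.

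\textbf{Step 1: exact accounting of the deficit.} We have $\Var_\mu(f) = \mathbb E\langle \nabla f, v\rangle = \mathbb E (Lu)^2$. Bochner's formula gives
\[ \mathbb E(Lu)^2 = \mathbb E\|\nabla^2 u\|_{HS}^2 + \mathbb E|v|^2 + \mathbb E\langle \nabla^2\psi\, v, v\rangle . \]
Write $Q(h) = \mathbb E|\nabla h|^2 - \mathbb E h^2$. This form is nonnegative on functions with $\mathbb E h = 0$, in particular on odd $h$, by the classical Poincaré inequality. Then
\[ \Var_\mu(f) = 2\mathbb E|v|^2 + D, \qquad D := \sum_i Q(v_i) + \mathbb E\langle \nabla^2\psi\, v,v\rangle \ge 0. \]
Expanding the square, using $\nabla f = 2x$, gives the identity
\[ \tfrac12\mathbb E|\nabla f|^2 - \Var_\mu(f) = 2\,\mathbb E|x - v|^2 + D . \]
So \eqref{eq:spectral_stability} yields two facts. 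First, $\mathbb E|x-v|^2 \le \delta/2$. Second, $\sum_i Q(v_i) \le \delta$ and $\mathbb E\langle\nabla^2\psi\, v,v\rangle\le\delta$.

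\textbf{Step 2: the linear model computation.} Project each $v_i$ in $L^2(\mu)$ onto the span of the coordinate functions, writing $v = Bx + r$ with $\mathbb E[r\, x^T] = 0$. We may rotate so that $\Sigma = \mathrm{diag}(\lambda_1,\dots,\lambda_n)$. By orthogonality,
\[ \mathbb E|x-v|^2 = \Tr\bigl((B-I)\Sigma(B-I)^T\bigr) + \mathbb E|r|^2 \ge \Tr\bigl((B-I)\Sigma(B-I)^T\bigr). \]
For the linear part, $Q$ evaluated on $Bx$ summed over coordinates equals $\Tr(B(I-\Sigma)B^T)$. Ignoring cross terms, the total is therefore at least
\[ \sum_{i,j}\Bigl[(B_{ij}-\delta_{ij})^2\lambda_j + B_{ij}^2(1-\lambda_j)\Bigr] \ge \sum_j \min_b\bigl[(b-1)^2\lambda_j + b^2(1-\lambda_j)\bigr] = \sum_j \lambda_j(1-\lambda_j) = \Tr(\Sigma(I-\Sigma)). \]
This is exactly the claimed quantity. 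It also shows that the statement is sharp at the level of this heuristic, and that $0\le\Sigma\le I$ (Brascamp–Lieb) is what makes every term nonnegative.

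\textbf{Step 3: controlling the cross term (main obstacle).} We have $\sum_i Q(v_i) = \sum_i Q((Bx)_i) + \sum_i Q(r_i) + 2\sum_i Q((Bx)_i, r_i)$. The mixed term simplifies, using $\mathbb E[r\,x^T]=0$ and integration by parts against $e^{-W}$, to
\[ \sum_i Q((Bx)_i, r_i) = \mathbb E\,\Tr(B\nabla r) = \Tr\bigl(B\,\mathbb E[r\,\nabla\psi^T]\bigr). \]
This vanishes when $\psi$ is quadratic but not in general. I would try to absorb it in two ways. First, by Cauchy–Schwarz it is bounded by $\|B\|\,(\mathbb E|r|^2)^{1/2}(\mathbb E_\mu|\nabla\psi|^2)^{1/2}$. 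Second, I would use the extra terms available in the deficit: $\mathbb E|r|^2$ and $\sum_i Q(r_i)$ appear with positive coefficients, and $\mathbb E\langle\nabla^2\psi\, v,v\rangle\le\delta$ should control $\nabla\psi$ along the relevant directions, for instance via the identity $\mathbb E[x\,\nabla\psi^T] = I - \Sigma$ and the evenness of $\psi$. The aim is an inequality of the form
\[ 2\mathbb E|x-v|^2 + D \ \ge\ c\,\Tr(\Sigma(I-\Sigma)) - C'\bigl(2\mathbb E|x-v|^2 + D\bigr), \]
which gives $\Tr(\Sigma(I-\Sigma))\le C\delta$. The restriction $\delta<c$ is used to ensure $B$ is close to the identity on the directions with $\lambda_j$ bounded away from $0$, so that $\|B\|$ stays bounded where it matters.

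If this direct absorption fails, my fallback is a cruder Cauchy–Schwarz with a test matrix. Applying $Q(h,w)^2\le Q(h)Q(w)$ with $w = (\tilde B x)_i$, for a $\Sigma$-dependent $\tilde B$ (e.g. $\tilde B = \Sigma$), bounds $\Tr\bigl(\tilde B(I-\Sigma)\bigr)$ by terms in $\sqrt{\delta}$, which would then be bootstrapped to order $\delta$ using Step 1.
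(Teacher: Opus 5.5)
There is a genuine gap, and it is exactly at Step 3. Steps 1 and 2 are correct. Step 1 is an exact version of the Bochner computation in the proof of Proposition~\ref{prop:even_stability}, and the linear-model minimization explains well why $\Tr(\Sigma(I-\Sigma))$ is the right quantity. But Step 3 is never carried out, and the cross term cannot be treated as an error.

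Take $\mu$ to be $\gamma$ restricted to a slab $[-R,R]\subset\mathbb R$ (or, in the limit, its regular approximations), the basic example of the large-inradius regime. The Neumann condition $u'(\pm R)=0$ gives $v(\pm R)=0$. Integrating by parts then gives $\mathbb E[v']=\mathbb E[xv]=b\lambda$. So your cross term is $b(\mathbb E[v']-b)=-b^2(1-\lambda)=-Q(bx)$, and $Q(v)=Q(r)-Q(bx)$. The linear-model contribution is cancelled twice over. In this example the required lower bound comes entirely from the nonlinear boundary layer ($\mathbb E r^2$ is of order $1-\lambda$), that is, from the terms $\mathbb E|r|^2$ and $\sum_i Q(r_i)$ that Step 2 discards.

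None of your proposed tools reach this:
\begin{itemize}
\item $\mathbb E_\mu|\nabla\psi|^2$ is not controlled by $\delta$. It is infinite for $\gamma_K$ and blows up along regularizations, so no uniform bound can come from it.
\item Since $r$ is $L^2(\mu)$-orthogonal to linear functions, the identity $\mathbb E[x\nabla\psi^T]=I-\Sigma$ gives no information on $\mathbb E[r\nabla\psi^T]$.
\item The fallback pairs $v_i$ with linear test functions through $Q(v_i,x_j)=\mathbb E[\partial_j v_i]-\mathbb E[v_ix_j]$. In the slab example this is exactly $0$, so the Cauchy--Schwarz step yields nothing to bootstrap.
\end{itemize}
The target inequality at the end of Step 3 is simply a restatement of the theorem.

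The missing idea is a second layer of Bochner's formula, applied to $L^{-1}$ of the slack functions $\partial_i u$ rather than to $u$ alone. The paper uses it twice.
\begin{itemize}
\item Lemma~\ref{lem:odd_stability} shows that near-equality in the Poincar\'e inequality forces $\nabla\varphi$ to be nearly constant. This gives Theorem~\ref{thm:strong_cov_stability}, which with $A=I$ settles the directions with $\sigma_i\le\frac12$.
\item Proposition~\ref{prop:pref_direction}, a generalized Klartag improved Lichnerowicz inequality, shows that a normalized $g\in L^2_0(\mu)$ with $\|\nabla g\|_2^2\le 1+\epsilon$ forces variance at least $1-2\epsilon$ in the direction of $\mathbb E[xg]$.
\end{itemize}
The paper applies the second result to $\partial_{\theta_i}u/\|\partial_{\theta_i}u\|$. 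Here $\theta_1,\dots,\theta_k$ is an orthonormal basis of the large-variance eigenspace, chosen via the singular value decomposition of $T=\int x\otimes\nabla u\,d\mu$ so that the vectors $T\theta_i$ are orthogonal. The trace maximum principle then gives $\sum_{\sigma_i\ge 1/2}(1-\sigma_i)\le 6\delta$. This is the mechanism that turns a small Poincar\'e deficit of the nonlinear function $\partial_\theta u$ into sharp information about $\Sigma$, which your $L^2$-projection onto linear functions cannot capture.
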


Note that by the Poincar\'e inequality for $\mu$ (applied to linear functions), one always has $\Sigma \le I$. Hence, by diagonalizing $\Sigma$ the conclusion of Theorem \ref{thm:cov_stability} is equivalent to the following: letting $\sigma_1, \ldots, \sigma_n$ be the eigenvalues of $\Sigma$, one has
$$\sum_{i = 1}^n \min(\sigma_i, 1 - \sigma_i) \le C\delta$$
for a universal constant $C > 0$. 

Also note that Theorem \ref{thm:cov_stability}
manifests the same kind of dichotomy as already seen in the stability result of \cite{HLRV23}, with the small-covariance (resp. large-covariance) regimes corresponding to the case in which $K$ has small (resp. large) inradius.

Our second main result applies to the more general setting of the strong (B)-theorem:

\begin{theorem}\label{thm:strong_cov_stability} Let $\mu$ be an even $1$-log concave measure, $A \in \mathbb R^{n \times n}$ a symmetric matrix, and let $f_A(x) = \langle x, Ax\rangle$. If 
\begin{equation}
\frac{1}{2}\mathbb E_\mu[|\nabla f_A|^2] - \Var_\mu(f_A)  \le \delta,
\end{equation}
then
$$\|A(I - \Sigma) \Sigma^{1/2}\|_{HS}^2 \le C\delta$$
for a universal constant $C > 0$.
\end{theorem}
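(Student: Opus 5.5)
The plan is to run the $L^2$ (Hörmander/Bochner) proof of the even Poincaré inequality \eqref{eq:even_poin_mu_t} for $\mu$ and keep every nonnegative term that is thrown away there. By approximation we may assume $d\mu = e^{-W}dx$ with $W$ smooth, even, and $W = \frac{|x|^2}{2} + \psi$ with $\psi$ convex, so that $\nabla^2 W \ge I$. Let $L = \Delta - \langle \nabla W, \nabla\cdot\rangle$, and solve $Lu = f_A - \mathbb E_\mu f_A$ with $u$ even. Then $\Var_\mu(f_A) = -\mathbb E\langle \nabla f_A, \nabla u\rangle$, and by Bochner
$$\Var_\mu(f_A) = \mathbb E(Lu)^2 = \mathbb E\|\nabla^2 u\|_{HS}^2 + \mathbb E|\nabla u|^2 + \mathbb E\langle \nabla u, \nabla^2\psi\, \nabla u\rangle .$$
Expanding $\frac12 \mathbb E|\nabla f_A + 2\nabla u|^2$ and inserting this identity gives the exact formula
$$\tfrac12 \mathbb E|\nabla f_A|^2 - \Var(f_A) = \tfrac12\mathbb E|\nabla f_A + 2\nabla u|^2 + \sum_i \Big(\mathbb E|\nabla \partial_i u|^2 - \Var(\partial_i u)\Big) + \mathbb E\langle \nabla u, \nabla^2\psi\,\nabla u\rangle .$$
Here we use that each $\partial_i u$ is odd, hence centered, so $\mathbb E|\nabla u|^2 = \sum_i \Var(\partial_i u)$. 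All three terms are nonnegative: the middle one by the ordinary Poincaré inequality for $\mu$. Hence each term is $\le \delta$. Writing $v = \nabla u + Ax$ (note $\nabla f_A = 2Ax$), this yields:
(i) $\mathbb E|v|^2 \le \delta/2$;
(ii) each $\partial_i u$ nearly saturates the Poincaré inequality, with total deficit at most $\delta$;
(iii) $\mathbb E\langle \nabla u, \nabla^2\psi \nabla u\rangle \le \delta$.

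The second step converts this information into the covariance. Integration by parts gives $\mathbb E[g\,\partial_j W] = \mathbb E[\partial_j g]$. Applied to $g = x_i$, and using evenness, this yields the identity
$$\Cov_\mu(x, \nabla\psi) = \mathbb E[x\otimes \nabla \psi] = I - \Sigma .$$
So $A(I-\Sigma) = \mathbb E[Ax \otimes \nabla\psi]$. Substituting $Ax = -\nabla u + v$ gives
$$A(I-\Sigma) = -\mathbb E[\nabla u\otimes \nabla\psi] + \mathbb E[v\otimes\nabla\psi].$$
To estimate $\|A(I-\Sigma)\Sigma^{1/2}\|_{HS}$, I would test these matrices against $\Sigma^{1/2}$, i.e. pair with an arbitrary matrix $B$ of Hilbert--Schmidt norm $1$ and study $\mathbb E\langle \nabla u, \nabla\psi\rangle$-type expressions weighted by $B\Sigma^{1/2}$.

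For the $\nabla u$ term I would use Cauchy--Schwarz in the quadratic form $\nabla^2\psi$: after integrating by parts once more, $\mathbb E[\partial_i u\,\partial_j\psi]$ becomes an expression in $\nabla^2\psi$ paired with $\nabla u$ and a linear function. The factor with $\nabla u$ is controlled by (iii). The factor with the linear function is $\mathbb E\langle a, \nabla^2\psi\, a\rangle$, which equals the Poincaré deficit of the linear function $\langle a, x\rangle$, namely $\langle (I-\Sigma)a, a\rangle$. This is exactly why the weight $\Sigma^{1/2}$ appears.

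For the $v$ term, $\nabla\psi$ need not be in $L^2$ with a good bound, so I would avoid it by moving derivatives through the Poincaré-deficit bilinear form
$$Q(g,h) = \mathbb E\langle\nabla g,\nabla h\rangle - \Cov(g,h),$$
which is positive semidefinite on $H^1(\mu)$. For linear $\ell$ one has $Q(g,\ell) = \Cov(g, \langle a, \nabla\psi\rangle)$. Then $|Q(\partial_i u, \ell)|^2 \le Q(\partial_i u,\partial_i u)\,Q(\ell,\ell)$ by Cauchy--Schwarz for $Q$, and (ii) bounds the first factor by $\delta$ in total. Combining this with (i), via $\Cov(v_i, x_j)$ bounded in terms of $\mathbb E|v|^2$ and $\Sigma$, should give an inequality of the form
$$\|A(I-\Sigma)\Sigma^{1/2}\|_{HS}^2 \le C\sqrt{\delta}\,\|A(I-\Sigma)\Sigma^{1/2}\|_{HS} + C\delta ,$$
which closes to the claimed bound $C\delta$.

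The main obstacle is this last step. We must bound $\mathbb E[v\otimes \nabla\psi]$, or its replacement through $Q$, using only the $L^2$-smallness of $v$, with no control on $\nabla v$. I would first try to arrange the algebra so that $v$ only ever appears paired with linear functions, i.e. through $\Cov(v, x)$, which is bounded by $\sqrt{\mathbb E|v|^2}\,\|\Sigma^{1/2}\|$. The key is to choose the order of integrations by parts so that every $\nabla\psi$ is absorbed either into the form $Q$ or into the identity $\Cov(x,\nabla\psi) = I - \Sigma$.
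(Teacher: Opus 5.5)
Your first step is correct and is essentially the paper's Proposition~\ref{prop:even_stability}. It is the same Bochner computation with the nonnegative term $\mathbb E\langle \nabla u,\nabla^2\psi\,\nabla u\rangle$ kept instead of discarded, and (i), (ii) are what the paper extracts.

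The second step has a genuine gap.

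First, the claimed identity $\mathbb E\langle a,\nabla^2\psi\,a\rangle=\langle (I-\Sigma)a,a\rangle$ is false. Integrating by parts gives $\mathbb E\langle a,\nabla^2\psi\,a\rangle=\langle (I-\Sigma)a,a\rangle+\mathbb E\langle a,\nabla\psi\rangle^2$, and the extra term is uncontrolled. For $\mu=\mathcal N(0,v)$ on $\mathbb R$ the left side is $(1-v)/v$, not $1-v$, and it is unbounded along regularizations of a Gaussian restricted to a convex body. So the route through (iii) fails. The $\nabla u$-term is fine via Cauchy--Schwarz for $Q$, using $\|\Sigma(I-\Sigma)\|_{\mathrm{op}}\le 1/4$ to stay dimension-free.

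More fundamentally, the $v$-term cannot be closed with the tools you list. Since $\mathbb E[v_i\langle b,\nabla\psi\rangle]=Q(v_i,\ell_b)=\mathbb E\,\partial_b v_i-\mathbb E[v_i\ell_b]$, you would need $(\mathbb E\nabla^2u+A)\Sigma^{1/2}$ to be small. Integrating $\mathbb E\nabla^2u$ by parts just returns $\mathbb E[\nabla u\otimes\nabla W]$. If instead you use (ii) through $Q$ to replace $\mathbb E\nabla^2u$ by $T=\mathbb E[\nabla u\otimes x]$, you get $T+A=\mathbb E[v\otimes x]+A(I-\Sigma)$. The expression is then $A(I-\Sigma)\Sigma^{1/2}$ plus small terms, i.e.\ exactly what you started with. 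The identity $\Cov(x,\nabla\psi)=I-\Sigma$ and the $Q$-representations are exact rewritings, and (i) gives no derivative control on $v$. So the hoped-for inequality $X^2\le C\sqrt\delta\,X+C\delta$ does not come out; the argument as set up is circular.

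The missing ingredient is an $L^2$ consequence of (ii), not merely a first-moment one. This is a stability version of the Poincar\'e inequality (the paper's Lemma~\ref{lem:odd_stability}): if a mean-zero $g$ has Poincar\'e deficit $\delta_i$, then $\mathbb E|\nabla g-\mathbb E[xg]|^2\le 2\delta_i$. It follows from a second Bochner argument:
\begin{itemize}
\item With $L=-\Delta+\nabla W\cdot\nabla$ and $w=L^{-1}g$, Bochner gives $\mathbb E\|\nabla^2 w\|_{HS}^2+\mathbb E|\nabla g-\nabla w|^2\le\delta_i$.
\item One has $\mathbb E\nabla w=\mathbb E[xg]$, and Poincar\'e applied to $\nabla w-\mathbb E\nabla w$ finishes.
\end{itemize}
Apply this to each odd $\partial_i u$ and sum. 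This gives $\mathbb E\|\nabla^2u-T\|_{HS}^2\le 2\delta$, and then, by Poincar\'e again, $\mathbb E|\nabla u-Tx|^2\le 2\delta$. So $\nabla u$ is $L^2$-close to a linear map, and after that no $\nabla\psi$ is needed. In your sign convention the argument concludes as follows:
\begin{itemize}
\item Combined with (i), $\|(A+T)\Sigma^{1/2}\|_{HS}^2=\mathbb E|(A+T)x|^2\le 5\delta$.
\item The matrix $T+A\Sigma=\mathbb E[v\otimes x]$ satisfies $\|T+A\Sigma\|_{HS}^2\le\mathbb E|v|^2\le\delta/2$, by Bessel's inequality against the orthogonal family $\langle x,\theta_j\rangle$, where $\theta_j$ is an eigenbasis of $\Sigma$ (norms at most $1$).
\item Writing $A(I-\Sigma)\Sigma^{1/2}=(A+T)\Sigma^{1/2}-(T+A\Sigma)\Sigma^{1/2}$ and using $\Sigma\le I$ gives the bound $11\delta$.
\end{itemize}
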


Our results improve upon Theorem \ref{thm:hlrv_stability} in several ways. Firstly, they apply in the more general setting of the functional (B)-theorem: that is, we characterize when any even $1$-log-concave measure nearly saturates the (functional) (B)-inequality, not just measures which come from restricting the Gaussian to convex bodies. Second, our results are stated in terms of the covariance, which is a finer and perhaps more natural invariant than the inradius. In particular, Theorem \ref{thm:cov_stability} excludes more cases than Theorem \ref{thm:hlrv_stability}: for example, it implies that $K = [-\epsilon, \epsilon] \times [1, 1]$ is not $\epsilon$-close to saturating the (B)-inequality, whereas Theorem \ref{thm:hlrv_stability} does not: indeed, $K$ has small inradius, but the covariance of $\gamma_K$ along the $y$-axis is bounded away from $0$ and from $1$. In addition, our bounds are dimensionless (i.e., the constants in Theorems \ref{thm:cov_stability} and \ref{thm:strong_cov_stability} do not depend at all on the dimension $n$). Finally, as we shall see in \S \ref{sec:sharp_impr}, Theorem \ref{thm:cov_stability} it is asymptotically sharp.

\begin{remark} While log-concave functions and measures serve as a natural generalization of convex sets, a much farther-reaching generalization of the theory of log-concave measures is the theory of metric measure spaces under a curvature lower bound. From the perspective of this theory, a $1$-log-concave measure $\mu$ on $\mathbb R^n$ is simply an instance of a metric measure space satisfying the curvature condition $RCD(1, \infty)$, and many of the useful properties of $(\mathbb R^n, \mu)$ such as functional and isoperimetric inequalities, hold for all $RCD(1, \infty)$ spaces. (See \cite{S18} for a gentle introduction to metric measure spaces, curvature lower bounds, and their many applications, including versions of the Brunn-Minkowski and Pr\'ekopa-Leindler inequalities.) In particular, $RCD(1, \infty)$ spaces satisfy a Poincar\'e inequality with constant $1$, just like $1$-log-concave measures.

The (B)-theorem has not, thus far, been generalized to the metric measure space setting; indeed, even the statement of the theorem relies on the linear structure of $\mathbb R^n$ via the notions of convex sets (or log-concave functions) and dilates. One can instead consider the purely spectral version of the (B)-theorem, and ask what properties must an $RCD(1, \infty)$ metric space have if its Poincar\'e constant is very close to $1$. Bertrand and Fathi \cite{BF22} studied precisely this question, and proved a stability result for the Poincar\'e inequality on general $RCD(1, \infty)$ spaces. (Courtade and Fathi \cite{CF20} had earlier studied the stability of the Poincar\'e inequality for $1$-log-concave measures on $\mathbb R^n$.)

To generalize the \textit{even} Poincar\'e inequality for even $1$-log-concave measures, one would need to show how to improve the Poincar\'e inequality under symmetry assumptions in the general setting of $RCD(1, \infty)$ spaces. We are not aware of any work in this direction. 
\end{remark}

\subsection{Outline of the proof}\label{subsec:method_of_proof}
The proof of Theorems \ref{thm:cov_stability} and \ref{thm:strong_cov_stability} consists of two main steps. The first is to follow the proof of the even Poincaré inequality carefully and determine the consequences of near-equality at each step of the argument. Applying this ``reverse engineering'' method yields Theorem \ref{thm:strong_cov_stability} in an entirely natural manner. (We remark that Herscovici et al. \cite{HLRV23} employed a similar strategy, but our argument differs from theirs in the details of the analysis.) 

Applying Theorem \ref{thm:strong_cov_stability} in the case $A = I$ yields that, if 
$$2\cdot \mathbb E_{\mu}[|x|^2] - \Var_{\mu}(|x|^2) \le \delta,$$ 
then each principal covariance of $\mu$ is either bounded above by $O(\delta)$ or bounded below by $1 - O(\sqrt \delta)$. The bound in the small-covariance regime is optimal, but the bound in the large-covariance regime is not: the correct lower bound, as demonstrated by Theorem \ref{thm:cov_stability}, is $1 - O(\delta)$.

Thus, to obtain optimal stability bounds in the large-variance regime and complete the proof of Theorem \ref{thm:cov_stability}, we require an additional argument. We prove a slightly generalized version of Klartag's improved Lichnerowicz inequality from \cite{K23} and combine it with some of the spectral information we extracted ``along the way'' in the proof of Theorem \ref{thm:strong_cov_stability} and use some additional linear-algebraic tricks to obtain a bound of $1 - O(\delta)$ on the large-variance directions. This part of the argument seems somewhat less natural than the first step, but we do not know how to use the ``reverse engineering'' method to obtain Theorem \ref{thm:cov_stability} directly.

In \S \ref{sec:prelims}, we give the necessary background, in particular recalling the proofs of the Poincar\'e and even Poincar\'e inequalities for even $1$-log-concave measures. The proofs of Theorem \ref{thm:strong_cov_stability} and of Theorem \ref{thm:cov_stability} are then given in \S \ref{sec:init_stability} and \S \ref{sec:sharp_impr}, respectively.

\subsection*{Acknowledgements} I would like to thank Alexandros Eskenazis and Liran Rotem for helpful discussions. Part of this work was conducted while the author was supported by a Chateaubriand Fellowship.

\section{Preliminaries}\label{sec:prelims}

\subsection*{Notations} As above, we use $\|\cdot\|_{HS}$ to denote the Hilbert-Schmidt or Frobenius norm on the space $\mathbb R^{n \times n}$ of $n \times n$ matrices, defined by $\|A\|_{HS}^2 = \sum_{i, j = 1}^n A_{ij}^2$. 

For vectors $x, y \in \mathbb R^n$, the matrix $x \otimes y \in \mathbb R^{n \times n}$ is defined as $(x \otimes y)_{ij} = x_i y_j$.

For a measure $\mu$ on $\mathbb R^n$, we write $\Cov(\mu)$ for its covariance matrix, which, if $\mu$ is even, is simply
$$\Cov(\mu) = \int (x \otimes x)\dmu(x).$$

We occasionally use the notation $f = O(g)$ to mean that there exists an absolute constant $C > 0$ such that $f \le Cg$. In addition, we sometimes use $C$ to denote a positive absolute constant whose value may change from line to line.

\subsection*{$1$-log-concave measures}
Let $\mu$ be an even $1$-log-concave measure. As our proofs proceed via the analysis of a differential operator defined on $L^2(\mu)$, it is simplest to immediately reduce to the case that the density of $\mu$ is well-behaved.

Following \cite{K23}, we say that $\mu$ is a \textit{regular} $1$-log-concave measure if $\mu$ has a density $\rho$ which is everywhere twice-differentiable, strictly positive on $\mathbb R^n$, and such that $V = -\log\rho$ satisfies the conditions
$$I_n \le \nabla^2 V(x) \le C I_n$$
for some $C > 0$ and all $x \in \mathbb R^n$, in the sense of symmetric matrices.

We claim that it is sufficient to prove Theorems \ref{thm:cov_stability} and \ref{thm:strong_cov_stability} under the assumption that $\mu$ is regular. This follows from an entirely standard approximation argument which we sketch only briefly. The main point is to show that if $d\mu = \rho\,dx$ is an even $1$-log-concave measure then there exists a sequence of regular even $1$-log-concave measures $d\mu_k = \rho_k\,dx$ with $\rho_k \to \rho$ pointwise, and such that $|\rho_k|, |\rho| \le C e^{-c|x|}$ for some $c, C > 0$. This implies by the dominated convergence theorem that $\Cov(\mu_n) \to \Cov(\mu)$, so if each $\mu_k$ satisfies the conclusions of Theorems \ref{thm:cov_stability} and \ref{thm:strong_cov_stability} then so does $\mu$. 

To find a sequence of regular measures converging to $\mu$, one convolves $\mu$ with a Gaussian of small variance to smooth it out, and then multiplies the density of the resulting measure $\nu$ by $e^{-c |x|^2/2}$, for some small $c > 0$, to obtain a $1$-log-concave measure. We refer to \cite[\S 2, \S 5]{K23} for a fuller treatment of a similar approximation argument. 

Thus, from now on, we fix an even, regular $1$-log-concave probability measure $\mu$ on $\mathbb R^n$, and write $d\mu = e^{-V(x)}\,dx$. 

We let $L = L_\mu$ be the Laplacian associated to $\mu$, which is the second-order elliptic differential operator satisfying the integration by parts formula
$$\int (Lf) g\dmu = \int \langle \nabla f, \nabla g\rangle\dmu,$$
i.e., $Lf = -\Delta f + \nabla V \cdot \nabla f$. $L$ is an (unbounded, essentially) self-adjoint, positive-semidefinite operator on $L^2(\mu)$, whose kernel is the space of constant functions. (We remark that $L_\mu$ is often defined with the opposite sign, as a \textit{negative}-semidefinite differential operator with second-order term $\Delta$; our convention for $L_\mu$ avoids a proliferation of minus signs in many formulas below.)

As $\mu$ is $1$-log-concave, $L_\mu$ has a discrete spectrum (see \cite[Appendix]{KP23}); moreover, all eigenfunctions of $L_\mu$ decay exponentially \cite{K23}. This suffices to justify our free use of spectral decomposition and integration by parts arguments below, as the dedicated reader may verify.

Modulo these technical issues, the results we shall need from the theory of (even) $1$-log-concave measures are quite easily proven, and thus we shall recall their proofs, which will also motivate our own arguments, rather than simply citing them from the literature.

We write
$$L_0^2(\mu) = \left\{h \in L^2(\mu): \int h\dmu = 0\right\}.$$
for the space of mean-zero square-integrable functions. For $k \in \mathbb N$ we set $$\|f\|_{H^k(\mu)}^2 = \int f (L^k f)\dmu,$$
and we also set
$$\|f\|_{H^{-1}(\mu)} = \sup \left\{\int fg\dmu: \|g\|_{H^1(\mu)} \le 1\right\}.$$
Note that $\|f\|_{H^{-1}(\mu)} = \infty$ if $f \in L^2(\mu) \backslash L^2_0(\mu)$.

The main tool in our approach to the (B)-theorem is the Bochner fomula:

\begin{proposition}[Bochner formula]\label{prop:bochner_formula} For all $f \in H^2(\mu)$, 
\begin{equation}\label{eq:bochner_formula}
    \int (Lf)^2\dmu = \int  \norm{\nabla^2 f}_{HS}^2 \dmu + \int \langle \nabla^2 V \cdot \nabla f, \nabla f\rangle\dmu.
\end{equation}
\end{proposition}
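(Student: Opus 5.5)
The Bochner formula is an integration-by-parts identity, so I will expand $\int (Lf)^2\dmu$ and commute derivatives. I first prove it for $f \in C_c^\infty(\mathbb R^n)$, and then pass to $H^2(\mu)$ by density. The regularity of $\mu$, namely $I_n \le \nabla^2 V \le C I_n$ with $V$ smooth and strictly positive density, is what makes both steps clean.

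\textbf{Step 1: smooth compactly supported $f$.} By the defining integration by parts formula with $g = Lf$,
$$\int (Lf)^2\dmu = \int \langle \nabla f, \nabla (Lf)\rangle \dmu.$$
I then compute $\nabla(Lf)$ directly from $Lf = -\Delta f + \nabla V\cdot\nabla f$:
$$\partial_i (Lf) = -\Delta \partial_i f + \sum_j \partial_j V\, \partial_{ij} f + \sum_j \partial_{ij}V\, \partial_j f = L(\partial_i f) + (\nabla^2 V\, \nabla f)_i.$$
This is the commutation relation $\nabla L = L\nabla + \nabla^2 V$. Substituting it gives
$$\int (Lf)^2 \dmu = \sum_i \int \partial_i f\, L(\partial_i f)\dmu + \int \langle \nabla^2 V \nabla f, \nabla f\rangle\dmu.$$
Applying integration by parts once more to each term $\int \partial_i f \,L(\partial_i f)\dmu$ gives $\sum_i \int |\nabla \partial_i f|^2\dmu = \int \|\nabla^2 f\|_{HS}^2\dmu$, which is \eqref{eq:bochner_formula}.

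The integration by parts formula $\int (Lf)g\dmu = \int\langle\nabla f,\nabla g\rangle\dmu$ holds for compactly supported smooth functions. It follows from the divergence theorem applied to $e^{-V}$, since $\operatorname{div}(e^{-V}\nabla f) = -e^{-V} Lf$. Here $V$ needs to be $C^3$, or at least the calculation must make sense for twice-differentiable $V$. If $V$ is only $C^2$, I would first mollify $V$, or alternatively note that the term $\nabla^2 V$ enters only through $\partial_i(\nabla V\cdot\nabla f)$, so no third derivatives of $V$ appear.

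\textbf{Step 2: extension to $H^2(\mu)$.} This is the step I expect to be the main obstacle, and it is purely technical. I take $f$ in the domain of $L$, with $f, Lf \in L^2(\mu)$, and approximate it by $f_k = \chi(x/k) f$ after smoothing, where $\chi$ is a cutoff. I must show that $Lf_k \to Lf$ in $L^2(\mu)$ while $\nabla^2 f_k$ and $\nabla f_k$ converge in $L^2(\mu)$.

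The cutoff error terms involve $\nabla f/k$ and $f/k^2$, and also $f\,\nabla V\cdot\nabla\chi/k$. The last term is the dangerous one, because $|\nabla V|$ grows linearly. Since $\nabla\chi(x/k)$ is supported where $|x|\sim k$, we have $|\nabla V|/k = O(1)$ there, using $\nabla^2 V\le C$, so it is controlled by $f$ on an annulus and tends to zero.

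Convergence of the right-hand side follows from Fatou's lemma combined with the identity applied to the differences $f_k - f_m$. Here I use that the right-hand side is a sum of nonnegative quadratic forms, since $\nabla^2 V \ge I$. Hence Cauchy sequences in the graph norm of $L$ are Cauchy in $\nabla^2$ and in $\nabla$. Alternatively, for the uses in the paper it suffices to have the identity for eigenfunctions and their finite combinations. These are smooth and decay exponentially, as cited from \cite{K23}, so the cutoff argument applies directly.
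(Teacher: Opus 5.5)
Your Step 1 is the paper's argument: the paper derives the Bochner formula in one line from the commutation relation $\nabla Lf = L\nabla f + \nabla^2 V\cdot\nabla f$ plus integration by parts. Your Step 2 correctly supplies the density justification the paper leaves to the reader, namely the cutoff with $|\nabla V| \lesssim |x|$ on the annulus and then the graph-norm Cauchy argument using $\nabla^2 V \ge I_n$.

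One small caveat concerns your ``alternatively'' remark. The paper applies the formula to $u = L^{-1} f$ for general $f$ (e.g.\ $f = |x|^2 - \int |x|^2\,d\mu$), which is not a finite combination of eigenfunctions, so the limiting argument of Step 2 is still needed there.
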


The Bochner formula follows directly from the algebraic relation $\nabla L f = L \nabla f + \nabla^2 V \cdot \nabla f$ and integration by parts. 

As 
$$\int \langle \nabla^2 V \cdot \nabla f, \nabla f\rangle\dmu \ge \int \langle \nabla f, \nabla f\rangle\,d\mu = \|f\|_{H^1(\mu)},$$
by $1$-log-concavity, \eqref{eq:bochner_formula} immediately yields that $\|f\|_{H^2(\mu)} \ge \|f\|_{H^1(\mu)}$ for any $f \in H^2(\mu)$. Applying this to any eigenfunction $\varphi$ of $L$ with eigenvalue $\lambda$ we obtain $\lambda^2 \ge \lambda$, i.e., either $\lambda = 0$ (in which case $\varphi$ must be constant) or $\lambda \ge 1$.

That is, writing
$$\lambda_1 = \inf_{\varphi \in L^2_0(\mu)} \frac{\|\varphi\|_{H^1(\mu)}^2}{\|\varphi\|_{L^2(\mu)}^2}$$ 
for the spectral gap of $L_\mu$, one has $\lambda_1 \ge 1$. By substituting, for any function $f$, the function $g = f - \int f\dmu \in L^2_0(\mu)$, one obtains the Poincaré inequality for $1$-log-concave measures: for any $f$, one has 
\begin{equation}\label{eq:poincare}
\Var_\mu(f) = \int f^2\dmu - \left(\int f\dmu\right)^2 \le \int |\nabla f|^2\dmu.
\end{equation}
Note that by applying the Poincaré inequality to linear functions, one obtains in particular that $\Cov(\mu) \le I$.

\begin{proposition}[{{Even Poincaré inequality; cf. \cite{CFM03}; \cite[Theorem 3]{BC13}}}]\label{prop:even_poinc} Let $\varphi \in L^2_0(\mu)$ be an even function satisfying $L \varphi = \lambda \varphi$. Then $\lambda \ge \lambda_1 + 1$.
\begin{proof}Normalize $\|\varphi\|_{L^2(\mu)} = 1$, and apply Bochner's formula to $\varphi$:
\begin{align*}
\lambda^2 &= \int (L\varphi)^2\dmu = \int  \norm{\nabla^2 \varphi}_{HS}^2\dmu + \int \langle \nabla^2 V \cdot \nabla f, \nabla f\rangle\dmu \\
&\ge \int  \norm{\nabla^2 \varphi}_{HS}^2 \dmu + \int |\nabla \varphi|^2\dmu = \int  \norm{\nabla^2 \varphi}_{HS}^2 \dmu + \lambda
\end{align*}
To treat the first term on the RHS, note that each partial derivative $\partial_i \varphi$ is odd and in particular mean-zero, so we may apply the Poincaré inequality \eqref{eq:poincare} with $f = \partial_i \varphi$ to obtain 
$$\|\nabla \partial_i \varphi\|_{L^2(\mu)}^2 \ge \lambda_1 \|\partial_i \varphi\|_{L^2(\mu)}^2.$$
Summing over $i$ we obtain 
$$\int  \norm{\nabla^2 \varphi}_{HS}^2 \dmu \ge \lambda_1 \|\nabla \varphi\|_{L^2(\mu)}^2 = \lambda_1 \lambda,$$
and substituting in the above we obtain $\lambda^2 \ge \lambda_1 \lambda + \lambda$, i.e., $\lambda \ge \lambda_1 + 1$.
\end{proof}
\end{proposition}

As $\mu$ is even, $L_\mu$ commutes with reflection through the origin and hence the orthogonal decomposition of $L^2(\mu)$ into the subspaces of even and odd functions is stable under $L_\mu$, so every even $f \in L^2(\mu)$ decomposes as a linear combination of even eigenfunctions of $L_\mu$. Hence, letting 
$$\lambda_1^e = \inf \left\{\frac{\|f\|_{H^1(\mu)}^2}{\|f\|_{L^2(\mu)}^2}: \text{$f \in L^2_0(\mu)$ even}\right\}$$
denote the spectral gap of $L$ on the subspace of even functions, Proposition \ref{prop:even_poinc} implies that $\lambda_1^e \ge \lambda + 1 \ge 2$, and hence, if $f \in L^2(\mu)$ is even, then it satisfies the strengthened Poincar\'e inequality
\begin{equation}\label{eq:even_poincare}
\Var_\mu(f) \le \frac{1}{2} \int |\nabla f|^2\dmu.
\end{equation}

Finally, by applying spectral decomposition, it is easily seen that $\left.L\right|_{L^2_0(\mu)}$ has a well-defined, bounded inverse $L^{-1}: L^2_0(\mu) \to L^2_0(\mu)$; moreover, one has $\|L^{-1} f\|_{H^k(\mu)} = \|f\|_{H^{k - 2}(\mu)}$ for all $k$.

\section{The initial stability estimate}\label{sec:init_stability}

From now on, we fix an even, regular $1$-log-concave measure $\mu$, as in \S \ref{sec:prelims}. We begin with a stability version of the Poincaré inequality for $\mu$.

\begin{lemma}\label{lem:odd_stability}
Let $\varphi \in L^2_0(\mu)$ be a smooth function such that $\int |\nabla \varphi|^2 \dmu - \int \varphi^2 \dmu \le \delta$. Then, letting 
$$\theta = \int \varphi(x) x \dmu(x) \in \mathbb R^n,$$ 
we have:
\begin{equation}
\int \abs{\nabla \varphi - \theta}^2 \dmu \le 2\delta.
\end{equation}
\end{lemma}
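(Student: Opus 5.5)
The plan is to pass to the ``potential'' $u = L^{-1}\varphi$ and compare $\nabla\varphi$ with $\nabla u$, whose mean is exactly $\theta$. Since $\varphi\in L^2_0(\mu)$, $u$ is well defined, and for each $i$
$$\int \partial_i u \dmu = \int \langle \nabla u, \nabla x_i\rangle \dmu = \int (Lu)\, x_i \dmu = \int \varphi\, x_i\dmu = \theta_i .$$
This identity is what makes $\theta$ the natural constant vector in the statement. I will expand $\varphi=\sum_k a_k\varphi_k$ in an orthonormal eigenbasis of $L$ on $L^2_0(\mu)$, with eigenvalues $\lambda_k\ge\lambda_1\ge 1$. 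In this basis the hypothesis reads $\sum_k(\lambda_k-1)a_k^2\le\delta$, and $u=\sum_k a_k\lambda_k^{-1}\varphi_k$.

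\emph{Step 1: $\nabla u$ is close to $\theta$.} Apply the Bochner formula (Proposition \ref{prop:bochner_formula}) to $u$ and use $\nabla^2V\ge I$:
$$\int\varphi^2\dmu=\int(Lu)^2\dmu\ \ge\ \int\|\nabla^2u\|_{HS}^2\dmu+\int|\nabla u|^2\dmu .$$
Then apply the Poincar\'e inequality \eqref{eq:poincare} to each $\partial_i u$, whose mean is $\theta_i$. This gives
$$\int|\nabla u-\theta|^2\dmu\le\int\|\nabla^2u\|_{HS}^2\dmu\le \sum_k\Bigl(1-\tfrac1{\lambda_k}\Bigr)a_k^2\le\sum_k(\lambda_k-1)a_k^2\le\delta .$$

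\emph{Step 2: $\nabla\varphi$ is close to $\nabla u$.} With $w=\varphi-u=\sum_k a_k(1-\lambda_k^{-1})\varphi_k$ we have
$$\int|\nabla w|^2\dmu=\sum_k\lambda_k\Bigl(1-\tfrac1{\lambda_k}\Bigr)^2a_k^2=\sum_k\frac{(\lambda_k-1)^2}{\lambda_k}a_k^2 .$$
This is at most $\sum_k(\lambda_k-1)a_k^2\le\delta$, because $(\lambda_k-1)/\lambda_k\le1$. Combining Steps 1 and 2 with $|a+b|^2\le2|a|^2+2|b|^2$ already yields $\int|\nabla\varphi-\theta|^2\dmu\le4\delta$.

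\emph{Step 3: sharpening the constant to $2$.} This is the step I expect to be the main obstacle. Instead of the crude inequality, I will expand the cross term exactly:
$$\int|\nabla\varphi-\theta|^2 = \int|\nabla w|^2+2\int\langle\nabla w,\nabla u\rangle-2\int\langle\nabla w,\theta\rangle+\int|\nabla u-\theta|^2 .$$
Here $\int\langle\nabla w,\nabla u\rangle=\int w\varphi=\sum_k(1-\lambda_k^{-1})a_k^2$, and the remaining cross term equals $\theta\cdot\bigl(\int\nabla\varphi\dmu-\theta\bigr)$.

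I would try to combine the exact identity $\int|\nabla u-\theta|^2=\sum_k a_k^2/\lambda_k-|\theta|^2$ with the Bochner bound from Step 1. That bound, rewritten, says $|\theta|^2\ge\sum_k(2/\lambda_k-1)a_k^2$, i.e. $\theta$ carries almost all the $L^2$ mass of $\varphi$ whenever $\varphi$ lives near eigenvalue $1$. The aim is to write everything as a weighted sum $\sum_k c(\lambda_k)a_k^2$ and check $c(\lambda)\le2(\lambda-1)$ for $\lambda\ge1$. The difficulty is that the term $\theta\cdot\int\nabla\varphi$ does not diagonalize in the eigenbasis, so a Cauchy--Schwarz estimate of the form $|\theta|\,\|\nabla w\|_{L^2(\mu)}$ has to be absorbed carefully. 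If the bookkeeping does not close at exactly $2\delta$, the argument still gives the lemma with a universal constant in place of $2$, which is all that the subsequent applications need.
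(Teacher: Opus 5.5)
Your Steps 1 and 2 are correct and follow the paper's route: $u=L^{-1}\varphi$, the identity $\int\nabla u\,d\mu=\theta$, Bochner with $\nabla^2V\ge I$, Poincar\'e applied to each $\partial_i u$, and a triangle inequality. The gap is that, as written, you only establish $4\delta$. Step 3 is an open-ended plan rather than an argument, so the stated constant $2$ is not proved.

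The factor $2$ is lost because you bound the two pieces separately by $\delta$. In Step 1 you weaken $(\lambda_k-1)/\lambda_k$ to $\lambda_k-1$, and in Step 2 you weaken $(\lambda_k-1)^2/\lambda_k$ to $\lambda_k-1$. What you are missing is that these two weights add up exactly to the hypothesis:
\[ \frac{\lambda_k-1}{\lambda_k}+\frac{(\lambda_k-1)^2}{\lambda_k}=\lambda_k-1. \]
So your own computations already give the joint bound
\[ \int\|\nabla^2u\|_{HS}^2\,d\mu+\int|\nabla\varphi-\nabla u|^2\,d\mu\le\sum_k(\lambda_k-1)a_k^2\le\delta . \]
Combine this with $\int|\nabla u-\theta|^2\,d\mu\le\int\|\nabla^2 u\|_{HS}^2\,d\mu$ and $|a+b|^2\le 2|a|^2+2|b|^2$. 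This gives $\int|\nabla\varphi-\theta|^2\,d\mu\le 2\delta$ immediately.

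This joint estimate is exactly the paper's key inequality, which the paper obtains without an eigenbasis. It writes
\[ \int\varphi^2\,d\mu=2\int\varphi\,Lu\,d\mu-\int(Lu)^2\,d\mu=2\int\langle\nabla\varphi,\nabla u\rangle\,d\mu-\int(Lu)^2\,d\mu. \]
It then bounds $\int(Lu)^2\,d\mu$ from below by Bochner and completes the square via $2\langle\nabla\varphi,\nabla u\rangle-|\nabla u|^2=|\nabla\varphi|^2-|\nabla\varphi-\nabla u|^2$. So Step 3, with its non-diagonal cross term $\theta\cdot\int\nabla\varphi\,d\mu$, is not needed at all. The sharp constant comes simply from not splitting the estimate before applying the triangle inequality.
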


\begin{proof}
Let $u$ be a smooth function such that $Lu = \varphi$. Then, by Bochner's formula \eqref{eq:bochner_formula}, we have:
\begin{align*}
\int \varphi^2 \dmu &= 2\int \varphi (Lu)\dmu - \int (Lu)^2\dmu \\
&= 2 \int \nabla\varphi \cdot \nabla u \dmu - \int (Lu)^2\dmu \\
&\le 2 \int \nabla\varphi \cdot \nabla u \dmu - \left(\int \norm{\nabla^2 u}_{HS}^2\,\dmu + \int |\nabla u|^2\dmu\right) \\ 
&= -\int \abs{\nabla \varphi - \nabla u}^2 \dmu + \int \abs{\nabla \varphi}^2 \dmu - \int \norm{\nabla^2 u}_{HS}^2 \dmu
\end{align*}
Rearranging and using the assumption $\int |\nabla \varphi|^2\dmu - \int \varphi^2\dmu \le \delta$, we obtain
\begin{equation} \label{eq:star1}
\int \norm{\nabla^2 u}_{HS}^2 \dmu + \int \abs{\nabla \varphi - \nabla u}^2 \dmu  \le \delta.
\end{equation}

Observe that
$$
\int \partial_i u \dmu = \int \langle \nabla u, \nabla x_i\rangle\dmu(x) = \int (Lu) x_i \dmu(x) = \int \varphi(x) x_i \dmu(x) = \theta_i,$$
i.e. $\theta = \int \nabla u \dmu$. Applying the Poincaré inequality \eqref{eq:poincare} coordinate-by-coordinate to the mean-zero function $\nabla u - \theta$  and using \eqref{eq:star1}, we obtain:
\begin{equation}\label{eq:poinc_nabla_u}
\int \abs{\nabla u - \theta}^2 \dmu \le \int \norm{\nabla^2 u}_{HS}^2 \dmu.
\end{equation}

Combining \eqref{eq:poinc_nabla_u} with \eqref{eq:star1} we thus obtain
\begin{align*}
\int \abs{\nabla \varphi - \theta}^2 \dmu &\le 2 \left( \int \abs{\nabla \varphi - \nabla u}^2 \dmu + \int \abs{\nabla u - \theta}^2 \dmu \right) \\ 
&\le 2\left( \int \abs{\nabla \varphi - \nabla u}^2 \dmu + \int  \norm{\nabla^2 u}_{HS}^2 \dmu \right) \le 2\delta,
\end{align*}
as claimed.
\end{proof}

\begin{proposition}\label{prop:even_stability}
Let $f \in L^2_0(\mu)$ be a smooth, even function such that 
$$\frac{1}{2}\int \abs{\nabla f}^2 \dmu - \int f^2 \dmu \le \delta.$$
Let $u$ satisfy $Lu = f$, and set $T = \int (\nabla u \otimes x) \dmu(x)$. Then
\begin{equation}\label{eq:nablaf_T} \int \abs{\nabla f - 2Tx}^2 \dmu \le 20\delta,
\end{equation}
and also
\begin{equation}\label{eq:nablaf_nablau}
\int \abs{\nabla f - 2\nabla u}^2 \dmu \le 2\delta
\end{equation}
and 
\begin{equation}\label{eq:nabla2_u}
\int \norm{\nabla^2 u}_{HS}^2 \dmu - \int \abs{\nabla u}^2 \dmu \le \delta.
\end{equation}
\end{proposition}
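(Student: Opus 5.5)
The plan is to run the argument of Lemma~\ref{lem:odd_stability} on $u$ itself, using Bochner's formula together with the fact that the first derivatives of $u$ are odd. We then apply Lemma~\ref{lem:odd_stability} to each partial derivative $\partial_i u$ to get the linear approximation.

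\emph{Step 1: an exact identity.} Since $f$ is even and $\mu$ is even, $u = L^{-1}f$ is even, and so each $\partial_i u$ is odd and in particular mean-zero. Integrating by parts gives $\int \nabla f\cdot\nabla u\dmu = \int f\,Lu\dmu = \int f^2\dmu$. Hence
$$\frac12\int|\nabla f - 2\nabla u|^2\dmu = \frac12\int|\nabla f|^2\dmu - 2\int f^2\dmu + 2\int|\nabla u|^2\dmu.$$
We introduce two quantities:
$$E_1 = \int (Lu)^2\dmu - \int\norm{\nabla^2 u}_{HS}^2\dmu - \int|\nabla u|^2\dmu, \qquad E_2 = \int\norm{\nabla^2u}_{HS}^2\dmu - \int|\nabla u|^2\dmu.$$
Here $E_1\ge 0$ by Bochner's formula \eqref{eq:bochner_formula} and $1$-log-concavity. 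Also $E_2 \ge 0$ by the Poincar\'e inequality \eqref{eq:poincare} applied to the mean-zero functions $\partial_i u$. Since $\int (Lu)^2 \dmu= \int f^2\dmu$, we have $2\int|\nabla u|^2\dmu = \int f^2\dmu - E_1 - E_2$. Substituting yields
$$\frac12\int|\nabla f-2\nabla u|^2\dmu + E_1 + E_2 = \frac12\int|\nabla f|^2\dmu - \int f^2\dmu \le \delta.$$
All three terms on the left are nonnegative. This gives \eqref{eq:nablaf_nablau} (with constant $2\delta$) and \eqref{eq:nabla2_u} (since $E_2\le\delta$) at once.

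\emph{Step 2: linear approximation of $\nabla u$.} Set $\varphi_i = \partial_i u$, which is smooth and lies in $L^2_0(\mu)$. Let $\delta_i = \int|\nabla\varphi_i|^2\dmu - \int\varphi_i^2\dmu$, so that $\sum_i\delta_i = E_2 \le \delta$. Lemma~\ref{lem:odd_stability} applied to $\varphi_i$ gives $\int|\nabla\partial_i u - \theta^{(i)}|^2\dmu\le 2\delta_i$, where $\theta^{(i)} = \int \partial_i u(x)\, x\dmu(x)$ is exactly the $i$-th row of $T$. Next, the function $g_i(x) = \partial_i u(x) - (Tx)_i$ is mean-zero, because $\partial_i u$ is odd and $\mu$ is even. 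Its gradient is $\nabla\partial_i u - \theta^{(i)}$, so the Poincar\'e inequality \eqref{eq:poincare} gives $\int g_i^2\dmu \le 2\delta_i$. Summing over $i$ gives $\int|\nabla u - Tx|^2\dmu\le 2\delta$.

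\emph{Step 3: conclusion.} Write $\nabla f - 2Tx = (\nabla f - 2\nabla u) + 2(\nabla u - Tx)$ and use $|a+b|^2\le 2|a|^2+2|b|^2$. This gives
$$\int|\nabla f - 2Tx|^2\dmu \le 2\cdot 2\delta + 2\cdot 4\cdot 2\delta = 20\delta,$$
which is \eqref{eq:nablaf_T}.

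The only step requiring care is the bookkeeping in Step 1. There one must recognize that the deficit splits exactly into three nonnegative pieces: the gradient discrepancy, the Bochner slack $E_1$, and the Poincar\'e slack $E_2$ for $\nabla u$. The rest is a direct application of Lemma~\ref{lem:odd_stability} and Poincar\'e, with parity ensuring every function involved is mean-zero.
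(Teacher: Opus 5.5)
Your proof is correct and follows essentially the same route as the paper: Bochner's formula rearranged to give $\frac12\int|\nabla f-2\nabla u|^2\dmu + E_2 \le \delta$, then Lemma~\ref{lem:odd_stability} applied to each $\partial_i u$, the Poincar\'e inequality, and the same $2\cdot 2\delta + 8\cdot 2\delta = 20\delta$ bookkeeping. The only differences are cosmetic: you keep the Bochner slack $E_1$ as an explicit nonnegative term where the paper drops it as an inequality, and you check explicitly that $\partial_i u$ and $\partial_i u - (Tx)_i$ are mean-zero, which the paper uses without comment.
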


\begin{proof}
Using Bochner's formula as before, we have
\begin{align*}
\int f^2 \dmu &\le 2\int \inp{\nabla f}{\nabla u} \dmu - \int \norm{\nabla^2 u}_{HS}^2 \dmu - \int \abs{\nabla u}^2 \dmu \\
&= - \left( \int \norm{\nabla^2 u}_{HS}^2 \dmu - \int \abs{\nabla u}^2 \dmu \right) + 2\int \inp{\nabla f}{\nabla u} \dmu - 2\int \abs{\nabla u}^2 \dmu \\
&= - \left( \int \norm{\nabla^2 u}_{HS}^2 \dmu - \int \abs{\nabla u}^2 \dmu \right) - \frac{1}{2} \int \abs{\nabla f - 2\nabla u}^2 \dmu + \frac{1}{2} \int \abs{\nabla f}^2 \dmu,
\end{align*}

Applying the assumption $\frac{1}{2}\int \abs{\nabla f}^2 \dmu - \int f^2 \dmu \le \delta$ and rearranging, we obtain:
\begin{equation}\label{eq:star2}
\max \left\{ \int \norm{\nabla^2 u}_{HS}^2 \dmu - \int \abs{\nabla u}^2 \dmu,\, \frac{1}{2} \int \abs{\nabla f - 2\nabla u}^2 \dmu \right\} \le \delta,
\end{equation}
which in particular yields \eqref{eq:nablaf_nablau} and \eqref{eq:nabla2_u}. 

Define $\delta_i \ge 0$ as
$$\delta_i = \int |\nabla \partial_i u|^2 \dmu - \int (\partial_i u)^2 \dmu,$$
so that $\sum_{i = 1}^n \delta_i \le \delta$, and let $t_i = \int x\, \partial_i u\dmu(x) \in \mathbb R^n$, the $i$th row of the matrix $T = \int (\nabla u \otimes x)\dmu(x)$. Applying Lemma \ref{lem:odd_stability} to $\partial_i u$ for each $i$, we obtain
$$
\int |\nabla \partial_i u - t_i|^2 \dmu \le 2\delta_i,
$$
and summing up, we obtain
$$\int \norm{\nabla^2 u - T}_{HS}^2 \dmu \le 2\sum_{i=1}^n \delta_i \le 2\delta.$$
By the Poincaré inequality \eqref{eq:poincare}, this yields
$$\int \abs{\nabla u - Tx}^2 \dmu(x) \le 2\delta.$$

Finally, employing \eqref{eq:star2} again:
$$
\int \abs{\nabla f - 2Tx}^2 \dmu \le 2 \left( \int \abs{\nabla f - 2\nabla u}^2 \dmu + 4\int \abs{\nabla u - Tx}^2 \dmu \right) \le 20\delta.$$
\end{proof}

We can now prove Theorem \ref{thm:strong_cov_stability}, which we restate here for ease of reference:

\begin{manualtheorem}{\ref{thm:strong_cov_stability}} Let $\mu$ be an even $1$-log concave measure, $A \in \mathbb R^{n \times n}$ a symmetric matrix, and let $f_A(x) = \langle x, Ax\rangle$. If 
\begin{equation}
\frac{1}{2}\mathbb E_\mu[|\nabla f_A|^2] - \Var_\mu(f_A)  \le \delta,
\end{equation}
then
$$\|A(I - \Sigma) \Sigma^{1/2}\|_{HS}^2 \le 11\delta.$$
\end{manualtheorem}

\begin{proof} Apply Proposition \ref{prop:even_stability} to $f_A$, and let $T$ be as defined there. By the proposition, $\int \abs{\nabla f - 2Tx}^2 \dmu \le 20\delta$. But 
\begin{align*}
\int \abs{\nabla f - 2Tx}^2\,d\mu &= 4 \int \abs{(A-T)x}^2 \dmu \\
&= 4 \sum_{i,j,k} (A-T)_{ij}(A-T)_{ik} \int x_j x_k \dmu = 4 \inp{(A-T)\Sigma}{A-T},
\end{align*}
so we obtain $\norm{(A-T)\Sigma^{1/2}}_{HS}^2 \le 5\delta$. 

Next, we have
$$\frac{1}{2} \int (\nabla f \otimes x)\dmu = \int (Ax \otimes x)\dmu =  A\Sigma,$$
so 
$$A\Sigma - T = \frac{1}{2} \int ((\nabla f - 2\nabla u) \otimes x)\dmu.$$
Thus, for any choice of orthonormal basis $\theta_1, \ldots, \theta_n$ of $\mathbb R^n$, we may write $$\|A\Sigma - T\|_{HS}^2 = \sum_{i, j = 1}^n \langle \theta_i, (A\Sigma - T)\theta_j\rangle^2 = \frac{1}{4}\sum_{i, j = 1}^n \left(\int \langle \theta_i, \nabla f - 2\nabla u\rangle \langle x, \theta_j\rangle\,d\mu\right)^2.$$
Choose $\theta_1, \ldots, \theta_n$ to be an orthonormal basis of eigenvectors of $\Sigma$, which in particular satisfy 
$$\int \langle x, \theta_i\rangle \langle x, \theta_j\rangle\,d\mu = 0$$
for $i \neq j$ and
$$\int \langle x, \theta_i\rangle^2\,d\mu = \langle \theta, \Sigma \theta\rangle \le 1.$$
For $i \in \{1, \ldots, n\}$, write $g_i(x) = \langle \theta_i, \nabla f - 2\nabla u \rangle$, $\ell_i(x) = \langle x, \theta_i\rangle$. Then as the $\ell_i$ are orthogonal in $L^2(\mu)$ and satisfy $\|\ell_i\|_{L^2(\mu)} \le 1$, we have for any $i$ that
$$\sum_{j = 1}^n \left(\int \langle \theta_i, \nabla f - 2\nabla u\rangle \langle x, \theta_j\rangle\,d\mu\right)^2 = \sum_{j = 1}^n \langle g_i, \ell_j\rangle_{L^2(\mu)}^2 \le \|g_i\|_{L^2(\mu)}^2,$$
and hence, applying \eqref{eq:nablaf_nablau}, that
\begin{align}
    \|A\Sigma - T\|_{HS}^2 &= \frac{1}{4}\sum_{i, j = 1}^n \left(\int \langle \theta_i, \nabla f - 2\nabla u\rangle \langle x, \theta_j\rangle\,d\mu\right)^2 \nonumber \\
    &\le \frac{1}{4}\sum_{i = 1}^n \int \langle \theta_i, \nabla f - 2\nabla u\rangle^2\,d\mu = \frac{1}{4}\int |\nabla f - 2\nabla u|^2\,d\mu \le \frac{\delta}{2}. \label{eq:sigma_t_diff}
\end{align}

As $\Sigma \le I$, we also have $\norm{(A\Sigma - T) \Sigma^{1/2}}_{HS} \le \norm{A\Sigma - T}_{HS} \le \sqrt{\delta/2}$. Putting everything together, we obtain
$$
\norm{A(I-\Sigma)\Sigma^{1/2}}_{HS}^2 \le 2 \left( \norm{(A-T)\Sigma^{1/2}}_{HS}^2 + \norm{(T-A\Sigma)\Sigma^{1/2}}_{HS}^2 \right) \le 11\delta,
$$
as claimed.
\end{proof}

\section{Improving the bounds in the large-variance regime}\label{sec:sharp_impr}
To determine whether Theorem \ref{thm:strong_cov_stability} is sharp in the case $A = I$, we can test it in the case in which $\mu = \mathcal N(0, v)$ for some $v \le 1$. In this case, it is well-known that $\mathbb E_\mu[x^4] = 3v^2$ and hence, letting $f(x) = x^2$, one has 
$$\frac{1}{2}\mathbb E[(f')^2] - \Var_\mu(f) = \frac{1}{2} \mathbb E[4x^2] - \mathbb E[x^4] + \mathbb E[x^2]^2 = 2v - 3v^2 + v^2 = 2v (1 - v).$$
Hence, the condition $\frac{1}{2}\mathbb E[(f')^2] - \Var_\mu(f) \le 2\delta$ implies $v (1 - v) \le \delta$, so either $v = O(\delta)$ or $1 - v = O(\delta)$. This is precisely what Theorem \ref{thm:cov_stability}, which we have yet to prove, predicts. Theorem \ref{thm:strong_cov_stability}, however, yields only that 
$$\|\Sigma^{1/2} (I - \Sigma)\|_{HS}^2 = O(\delta),$$ which reduces in the one-dimensional case to $v (1 - v)^2 = O(\delta)$, i.e., either $v = O(\delta)$ or $1 - v = O(\sqrt\delta)$.

The proof of Theorem \ref{thm:cov_stability} uses the following result, which is a generalization of a result of Klartag \cite[Proposition 2.3]{K23}. (Klartag's result is the special case of Proposition \ref{prop:pref_direction} in which $f$ is an eigenfunction.)

\begin{proposition}\label{prop:pref_direction} Suppose there exists $f \in L^2_0(\mu)$ with $\|f\|_2 = 1$, $\|\nabla f\|_2^2 \le 1 + \delta$, and let $\theta$ be a unit vector in the direction of $\int xf\dmu$. Then 
$$\int \langle x, \theta\rangle^2\dmu \ge 1 - 2\delta.$$
\end{proposition}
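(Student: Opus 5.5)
The plan is to show that the vector $v = \int x f\dmu$ has $|v|^2 \ge 1 - 2\delta$, and then transfer this to the variance in direction $\theta$ by Cauchy--Schwarz. All the analytic input comes from the quantities already controlled in the proof of Lemma \ref{lem:odd_stability}.

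\emph{Reduction to a bound on $|v|$.} If $\delta \ge 1/2$ the claim is trivial, so assume $\delta < 1/2$. Set $\ell(x) = \langle x, \theta\rangle$. Since $\theta = v/|v|$ and $\|f\|_{L^2(\mu)} = 1$,
$$|v| = \langle v, \theta\rangle = \int f\,\ell \dmu \le \|\ell\|_{L^2(\mu)} = \left(\int \langle x,\theta\rangle^2\dmu\right)^{1/2}.$$
So it suffices to prove $|v|^2 \ge 1 - 2\delta$. In particular this will also show $v \neq 0$, so that $\theta$ is well defined.

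\emph{Reusing the proof of Lemma \ref{lem:odd_stability}.} Let $u$ solve $Lu = f$. The proof of Lemma \ref{lem:odd_stability}, with $\varphi = f$, gives two facts. First, writing $a = \int \norm{\nabla^2 u}_{HS}^2\dmu$ and $b = \int |\nabla f - \nabla u|^2\dmu$, inequality \eqref{eq:star1} gives $a + b \le \delta$. This uses only $\int|\nabla f|^2 - \int f^2 \le \delta$, which holds here because $\|f\|_2 = 1$. Second, $\int \nabla u \dmu = v$, so by \eqref{eq:poinc_nabla_u} we have $\int|\nabla u - v|^2\dmu \le a$. Expanding the square then gives
$$\int |\nabla u|^2\dmu = \int |\nabla u - v|^2\dmu + |v|^2 \le a + |v|^2 .$$

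\emph{Lower bound on $\int|\nabla u|^2$.} Integration by parts gives $\int \langle \nabla f, \nabla u\rangle\dmu = \int f\, Lu\dmu = \int f^2 \dmu = 1$. Hence
$$b = \int|\nabla f|^2\dmu - 2 + \int|\nabla u|^2\dmu \le \int|\nabla u|^2\dmu - 1 + \delta,$$
so $\int|\nabla u|^2\dmu \ge 1 - \delta + b$.

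\emph{Conclusion.} Combining the last two displays,
$$|v|^2 \ge 1 - \delta + b - a \ge 1 - \delta + b - (\delta - b) = 1 - 2\delta + 2b \ge 1 - 2\delta .$$
This gives the proposition.

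The only delicate point is the bookkeeping in the last step: the constant $2$ comes out only if the $a$ and $b$ contributions from \eqref{eq:star1} are combined exactly as above, rather than bounded separately. Everything else is a direct reuse of estimates already established in the paper.
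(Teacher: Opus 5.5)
Your argument is correct and uses the same ingredients as the paper's proof. These are Bochner's formula for $u = L^{-1} f$ (which you import via \eqref{eq:star1}), the Poincar\'e inequality for $\nabla u - \int x f \dmu$, a lower bound on $\int |\nabla u|^2 \dmu$, and Cauchy--Schwarz to pass from $\left|\int x f\dmu\right|$ to the variance in direction $\theta$. The only difference is how $\int |\nabla u|^2\dmu = \|f\|_{H^{-1}(\mu)}^2$ is bounded below: the paper uses Cauchy--Schwarz to get $1/(1+\delta)$, while you expand $\int |\nabla f - \nabla u|^2 \dmu \ge 0$ to get $1 - \delta$, and either suffices. Your final step is also less delicate than you suggest, since using $b \ge 0$ and $a \le \delta$ separately already gives $|v|^2 \ge 1 - 2\delta$.
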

\begin{proof}Applying the Bochner formula to $u = L^{-1} f$ and the Poincaré inequality, and recalling that $\int \nabla u\dmu = \int xf\dmu$, we obtain 
\begin{align*}
1 = \|f\|_{L^2(\mu)}^2 = \int (Lu)^2\dmu &\ge \int  \norm{\nabla^2 u}_{HS}^2 \dmu + \int |\nabla u|^2\dmu \\
&\ge \left(\int |\nabla u|^2 \dmu - \left|\int \nabla u\dmu\right|^2 \right) + \int |\nabla u|^2\dmu \\ 
&= 2\|f\|^2_{H^{-1}(\mu)} - \left|\int x f\dmu\right|^2.
\end{align*}
By Cauchy-Schwarz, 
$$\|f\|_{H^{-1}(\mu)}^2 \ge \frac{\|f\|_{L^2(\mu)}^4}{\|f\|_{H^1(\mu)}^2} \ge \frac{1}{1 + \delta},$$
so 
$$1 \ge \frac{2}{1 + \delta} - \left|\int xf\dmu\right|^2,$$
i.e., 
$$\left|\int xf\dmu\right|^2 \ge \frac{1 - \delta}{1 + \delta} \ge 1 - 2\delta.$$

Now note that $\left|\int xf\dmu\right| = \int \langle x, \theta\rangle f\dmu$, and apply Cauchy-Schwarz to conclude that
$$\int \langle x,\theta\rangle^2\dmu(x) = \int \langle x,\theta\rangle^2\dmu(x) \int f^2\dmu \ge \left|\int \langle x,\theta\rangle f(x)\dmu(x)\right|^2 \ge 1 - 2\delta.$$
\end{proof}

We may now conclude the proof of Theorem \ref{thm:cov_stability}, which we restate here:

\begin{manualtheorem}{\ref{thm:cov_stability}} Let $\mu$ be an even $1$-log-concave measure with covariance matrix $\Sigma$, let $f(x) = |x|^2$ and suppose that 
\begin{equation}\label{eq:spectral_stability_rst}
\frac{1}{2} \mathbb E[|\nabla f|^2] - \Var_\mu(f) \le \delta
\end{equation}
for $\delta \in (0, c)$, where $c > 0$ is an absolute constant. Then
\begin{equation}\label{eq:cov_stability}
\Tr(\Sigma (I - \Sigma)) \le C\delta
\end{equation}
for some universal constant $C > 0$.
\end{manualtheorem}

\begin{proof}
Suppose $\mu$ satisfies \eqref{eq:spectral_stability_rst}, and let $\sigma_1 \ge \cdots \ge \sigma_n$ denote the eigenvalues of $\Sigma = \Cov(\mu)$. By Theorem \ref{thm:strong_cov_stability}, we have 
$$\sum_{i = 1}^n \sigma_i (1 - \sigma_i)^2 \le C \delta,$$
which implies that 
$$\sum_{\sigma_i \le \frac{1}{2}}  \sigma_i (1 - \sigma_i)\le 2C \delta.$$
To prove Theorem \ref{thm:cov_stability}, it suffices to show that 
$$\sum_{\sigma_i \ge \frac{1}{2}}  (1 - \sigma_i) \le C \delta.$$

Let $k = \max \{j: \sigma_j \ge \frac{1}{2}\}$, and let $V$ be the span of the eigenvectors of $\Sigma$ corresponding to $\sigma_1, \ldots, \sigma_k$.

For a unit vector $\theta \in V$, consider $\ell_\theta(x) = \langle x, \theta\rangle$, so that $\|\ell_\theta\|_{L^2(\mu)}^2 = \langle \theta, \Sigma \theta\rangle \ge \frac{1}{2}$. Letting $u = L^{-1} f$ as before, recall that by \eqref{eq:nablaf_nablau}, we have $\|\nabla f - 2\nabla u\|_{L^2(\mu)}^2 \le 2\delta$, i.e., $\|2x - 2\nabla u\|_{L^2(\mu)}^2 \le 2\delta$, and in particular $\|\ell_\theta - \partial_\theta u\|_{L^2(\mu)}^2 \le \frac{\delta}{2}$. This implies that 
$$\|\partial_\theta u\|_{L^2(\mu)}^2 \ge \left(\|\ell_\theta\| - \sqrt{\delta/2}\right)^2 \ge \frac{1}{3}.$$

Let $\theta_1, \ldots, \theta_k$ be an orthonormal basis of $V$ (to be chosen later). We have that 
$$\int  \norm{\nabla^2 u}_{HS}^2 \dmu - \int |\nabla u|^2\dmu \le \delta$$ 
by \eqref{eq:nabla2_u} and hence, in particular, 
$$\sum_{i = 1}^k (\|\nabla \partial_{\theta_i} u\|^2 - \|\partial_{\theta_i} u\|^2) \le \delta.$$
Write $\delta_i = \|\nabla \partial_{\theta_i} u\|^2 - \|\partial_{\theta_i} u\|^2$, and define $\tilde u_i = \frac{\partial_{\theta_i} u}{\|\partial_{\theta_i} u\|}$. 

By the above, we have $\|\tilde u_i\|_2^2 = 1$, $\|\nabla\tilde u_i\|_2^2 \le 1 + 3\delta_i$, and by Proposition \ref{prop:pref_direction}, this implies that if $\eta_i$ is a unit vector in the direction of $\int x \,\tilde u_i\dmu$ -- which is the same direction as that of $\int x\,\partial_{\theta_i} u\dmu$ -- then we have 
\begin{equation}\label{eq:large_var}
\langle \eta_i, \Sigma \eta_i\rangle = \Var_\mu(\langle x, \eta_i\rangle) \ge 1 - 6\delta_i.    
\end{equation}

Now let $T = \int (x \otimes \nabla u)\dmu$ as before. By \eqref{eq:sigma_t_diff}, $\|T - \Sigma\|_{HS} = O(\sqrt\delta)$ and hence, as $\left.\Sigma\right|_{V} \ge \frac{1}{2}$, $T$ is injective on $V$ and so $\dim TV = k$. The singular value decomposition implies the existence of an orthonormal basis $\theta_1, \ldots, \theta_k$ of $V$ such that the vectors $T\theta_1, \ldots, T\theta_k$ are orthogonal. But $T\theta_i = \int x \,\partial_{\theta_i} u \dmu$, so letting $\eta_i = \frac{T\theta_i}{|T\theta_i|}$ and applying \eqref{eq:large_var}, we have 
\begin{align*}
    \Tr(\left.\Sigma\right|_{TV}) &= \sum_{i = 1}^k \langle \eta_i, \Sigma \eta_i\rangle \ge  \sum_{i = 1}^k (1 - 6\delta_i) = k - 6\delta.
\end{align*}

The Wielandt minimax principle states that if $\lambda_1, \ldots, \lambda_j$ are the largest $j$ eigenvalues of a symmetric matrix $M \in \mathbb R^{n \times n}$, then 
$$\lambda_1 + \cdots + \lambda_j = \sup \{\Tr(\left.M\right|_W): \text{$W \subset \mathbb R^n$ a linear subspace of dimension $j$}\}.$$
Applying this to $\Sigma$, we obtain that the $k$ leading eigenvalues of $\Sigma$, namely $\sigma_1, \ldots, \sigma_k$, satisfy 
$$\sigma_1 + \cdots + \sigma_k \ge k - 6\delta,$$
i.e.,
$$\sum_{i = 1}^k (1 - \sigma_i) \le 6\delta,$$
concluding the proof.
\end{proof}

\bibliographystyle{plain}     
\bibliography{references}

\end{document}